\newtheorem{theorem}{Theorem}[section]
\newtheorem{lemma}{Lemma}[section]
\newtheorem{proposition}{Proposition}[section]
\newtheorem{cor}{Corollary}[section]
\theoremstyle{definition}
\newtheorem*{example}{Example}
\begin{document}

\title{Spinor class field for generalized Eichler orders}


\author{\sc Luis Arenas-Carmona}


\newcommand\Q{\mathbb Q}
\newcommand\alge{\mathfrak{A}}
\newcommand\Da{\mathfrak{D}}
\newcommand\Ea{\mathfrak{E}}
\newcommand\Ha{\mathfrak{H}}
\newcommand\oink{\mathcal O}
\newcommand\matrici{\mathbb{M}}
\newcommand\Txi{\lceil}
\newcommand\ad{\mathbb{A}}
\newcommand\enteri{\mathbb Z}
\newcommand\finitum{\mathbb{F}}
\newcommand\bbmatrix[4]{\left(\begin{array}{cc}#1&#2\\#3&#4\end{array}\right)}

\maketitle

\begin{abstract}
We compute the spinor class field for a genus of orders, in a central simple algebra of higher dimension,
 that are intersections of two maximal orders. 
In particular, we compute the number of spinor genera in a genus of such orders, as the degree of an explicit 
extension of class fields. 

\end{abstract}

\bigskip
\section{Introduction}

Let $K$ be a number field. Let $\alge$ be a central simple
$K$-algebra ($K$-CSA or CSA over $K$)
 of dimension $n^2>4$. By a generalized Eichler order (or GEO) we mean the intersection of two
maximal orders.  Two orders in $\alge$ are said to be the same genus if and only if they are locally conjugate
at all places of $K$. For any genus  $\mathbb{O}$ of orders of maximal rank we can define its
 spinor class field $\Sigma/K$, an abelian extension that classifies conjugacy classes in $\mathbb{O}$.
 More precisely, for any order $\Da\in\mathbb{O}$, the field
 $\Sigma$ is the class field corresponding to the class group $K^*H(\Da)$,
where $H(\Da)$ is the group of reduced norms of elements in the adelization $\alge_\ad^*$ of
$\alge^*$ stabilizing $\Da$ by conjugation. In \cite[\S3]{spinor} we proved the existence of an explicit map
$$\rho:\mathbb{O}\times\mathbb{O}\rightarrow\mathrm{Gal}(\Sigma/K),$$
with the following properties:
\begin{enumerate}
\item $\Da$ and $\Da'$ are in the same  conjugacy class if and only if
$\rho(\Da,\Da')=\mathrm{Id}_\Sigma$, \item
$\rho(\Da,\Da'')=\rho(\Da,\Da')\rho(\Da',\Da'')\qquad\forall
(\Da,\Da',\Da'')\in\mathbb{O}^3$.
\end{enumerate}
  This holds also for quaternion $K$-algebras satisfying Eichler's condition (\S2). In fact,
the definition of spinor class field extends easily to general quaternion algebras or CSAs over global functions fields
via the theory of spinor genera \cite{abelianos}. The class group defining $\Sigma$, for any genus $\mathbb{O}$
of Eichler orders, is already implicit in \cite[Cor. III.5.7]{Vigneras}. However, for CSAs of dimension $3^2$ or larger, only
the spinor class field for maximal orders was previously  known explicitly
\cite{spinor}:
\begin{quote}
Let $\alge$ be an $n^2$-dimensional $K$-CSA.
For every place $\wp$ of $K$,
let $f_\wp(\alge/K)=f$, where $\alge_\wp\cong\matrici_f(E_\wp)$, for a local division algebra
$E_\wp$ over $K_\wp$. Then
the spinor class field of maximal orders in $\alge$ is the maximal exponent-$n$
sub-extension $\Sigma_0$ of the wide Hilbert class field of $K$ satisfying the following conditions:
\begin{enumerate}\item $f_\wp(\Sigma_0/K)$ divides $f_\wp(\alge/K)$ at all finite places. \item  $\Sigma_0/K$ splits at every real place $\wp$ of $K$ where $f_\wp(\alge/K)=n$.
\end{enumerate} 
\end{quote}
In the same language, the corresponding result, for Eichler orders in the quaternionic case, is as follows \cite{Eichler2}:
\begin{quote}
The spinor class field for Eichler orders of level $I=\prod_\wp\wp^{\alpha(\wp)}$ in a quaternion algebra $\alge$
 over $K$ is the maximal exponent-$2$
sub-extension $\Sigma$, of the spinor class field $\Sigma_0$ of maximal orders, splitting at all places where $\alpha(\wp)$ is odd.
\end{quote}

The purpose of the current work is to give a similar result for GEOs on a $K$-CSA $\alge$
of arbitrary dimension. To state this result we need some facts about local GEOs.

\subparagraph{Locally symmetric GEOs.}
Let $k=K_\wp$ be a local field and let $\alge=\matrici_f(B)$ be a $k$-CSA, where $B$ is a division algebra.
 Recall that $B^f$, the space of column vectors, is naturally
a left $\matrici_f(B)$-module and a right $B$-module, and this bi-module structure is the one considered
throughout this paper. Every maximal order in $\alge$ has the form $\Da_\Lambda=\{a\in\alge|a\Lambda\subseteq\Lambda\}$,
for some Lattice $\Lambda$ on the column space $B^f$ satisfying $\Lambda\oink_B=\Lambda$, 
where $\oink_B$ is the  maximal order of $B$. Such lattices are called $\oink_B$-lattices.
Note that, for $\lambda\in B^*$, the map $x\mapsto x\lambda$ is not a $B$-module homomorphism
unless $\lambda$ is central,
but $\Lambda\mapsto \Lambda\lambda$ define an action of $B^*$ on the set of lattices since
$\lambda\oink_B=\oink_B\lambda$. In these notations,
 $\Da_\Lambda=\Da_M$ if and only if $M=\Lambda\lambda$ for some $\lambda\in B^*$.

Let $\Lambda$ and $M$ be two $\oink_B$-lattices in $B^f$ and  let $\pi$ be a uniformizing parameter of $B$.
By the theory of invariant factors, there exists a $B$-basis $\{e_1,\dots,e_f\}$ of $B^f$, such that
$$\Lambda=e_1\oink_B+e_2\oink_B+\cdots+e_f\oink_B, \quad
M=\pi^{r_1}e_1\oink_B+\pi^{r_2}e_2\oink_B+\cdots+\pi^{r_f}e_f\oink_B,$$
where $r_1\leq r_2\leq\cdots\leq r_f$. The integers  $r_1,\cdots,r_f$ are call the invariant exponents of $(\Lambda,M)$.
The class $\rho_\wp(\Da_\Lambda,\Da_M)=\overline{r_1+\dots+r_f}\in\enteri/f\enteri$ is called the total (local) distance between the maximal orders 
$\Da_\Lambda$ and $\Da_M$. This distance is a conjugation invariant of $\Da$. The vector
$( r_1,r_2,\dots,r_f)\in\enteri^n$ is called the type-distance from $\Lambda$ to $M$ and its image in 
$\enteri^n/\langle(1,1,\dots,1)\rangle$ is an invariant of the pair $(\Da_\Lambda,\Da_M)$.
 A local GEO $\Da=\Da_\Lambda\cap\Da_M$ is said to be symmetric if and only
if the type distance from $\Lambda$ to $M$ satisfies
$$(r_2-r_1, r_3-r_2,\dots,r_f-r_{f-1})=(r_f-r_{f-1},r_{f-1}-r_{f-2},\dots,r_2-r_1).$$
A global GEO is locally symmetric if every completion is symmetric. 
Two global GEOs $\Da=\Da_1\cap\Da_2$ and $\Da'=\Da'_1\cap\Da'_2$ 
 are in the same genus if and only if the type distances of the pairs 
$(\Da_1,\Da_2)$ and $(\Da'_1,\Da'_2)$ coincide at every finite place,
at follows easily from the definition of invariant factors. 
 
\begin{theorem}\label{t11}
The spinor class field for a global GEO $\Da=\Da_1\cap\Da_2$ in a $K$-CSA $\alge$
is the maximal  subextension $\Sigma$, of the spinor class field $\Sigma_0$ for
maximal orders, whose local inertia degree
$f_\wp(\Sigma/K)$ divides  the total distance $\rho_\wp(\Da_1,\Da_2)$ 
 at every place $\wp$ where $\Da_\wp$ is symmetric.
\end{theorem}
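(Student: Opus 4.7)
Since $\Sigma$ is the class field of $K^*H(\Da)$ with $H(\Da)=n(N(\Da))$, the proof reduces to a local computation of $H(\Da)_\wp=n(N(\Da_\wp))$ at each finite place $\wp$ followed by the standard translation via class field theory. Fix $\wp$, write $\alge_\wp=\matrici_f(B)$ for a local division algebra $B$, and realize $\Da_{i,\wp}=\Da_{\Lambda_i}$ in an invariant-factor basis with $\Lambda_1=\oink_B^f$ and $\Lambda_2=D\Lambda_1$, where $D=\mathrm{diag}(\pi^{r_1},\dots,\pi^{r_f})$ and $r_1\le\cdots\le r_f$.

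The first observation is a structural dichotomy: the unordered pair $\{\Da_{1,\wp},\Da_{2,\wp}\}$ is recovered from $\Da_\wp$ (different pairs of maximal orders have different intersections, by uniqueness of invariant factors), so every $a\in N(\Da_\wp)$ either preserves the pair or swaps it. Write $N(\Da_\wp)=N^0\sqcup N^{\mathrm{sw}}$ accordingly, with $N^0=N(\Da_{1,\wp})\cap N(\Da_{2,\wp})$. Using $N(\Da_\Lambda)=B^*\cdot\Da_\Lambda^*$, a short calculation gives $v_\wp(n(N^0))=f\enteri$, matching the local inertia of $\Sigma_0$.

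For $N^{\mathrm{sw}}$: a swap element must have the form $a=\lambda_1 DU$ with $U\in\mathrm{GL}_f(\oink_B)$, so that the first swap condition $a\Lambda_1\in\Lambda_2B^*$ holds; the second condition $a\Lambda_2\in\Lambda_1B^*$ becomes $\lambda_2^{-1}\lambda_1 DUD\in\mathrm{GL}_f(\oink_B)$. Comparing reduced norms forces $v_K(n_B(\lambda_1))-v_K(n_B(\lambda_2))=-2R/f$ with $R=\sum r_i$, while coefficient-wise integrality together with unit determinant forces the existence of a permutation $\sigma$ with $r_i+r_{\sigma(i)}=2R/f$ for all $i$. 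Sorting yields $\sigma(i)=f+1-i$ and the palindromic identity $r_i+r_{f+1-i}=r_1+r_f$, which is exactly the symmetry condition. Conversely, when symmetry holds, the antidiagonal element $e_i\mapsto\pi^{r_{f+1-i}}e_{f+1-i}$ is a swap whose reduced norm has valuation $R$. Hence $v_\wp(n(N^{\mathrm{sw}}))\subseteq R+f\enteri$, and the local valuation group of $H(\Da)_\wp$ is $f\enteri$ at non-symmetric places and $f\enteri+R\enteri=\gcd(f,R)\enteri$ at symmetric ones.

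Via class field theory, the local inclusions $H(\Da)_\wp\supseteq H(\Da_1)_\wp$ yield $\Sigma\subseteq\Sigma_0$, while the exact computation above makes $f_\wp(\Sigma/K)$ equal to the inertia of $\Sigma_0$ at non-symmetric places and a divisor of $\rho_\wp(\Da_1,\Da_2)$ at symmetric places, with maximality within $\Sigma_0$ also following from the explicit determination of $H(\Da_\wp)$. The main obstacle is the nontrivial equivalence in the analysis of $N^{\mathrm{sw}}$: upgrading the numerical constraint $f\mid 2R$ to the full palindromic symmetry requires combining a determinant-type argument with careful book-keeping of lower bounds on individual matrix entries, and the non-commutativity of $B$ forces one to track reduced norms of scalar matrices via $n(\lambda I)=n_B(\lambda)^f$ throughout, rather than ordinary determinants.
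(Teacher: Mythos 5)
Your argument is correct in substance but follows a genuinely different route from the paper. The paper works entirely inside the Bruhat--Tits building: it proves (Lemma 3.4) that a pair of vertices in the block $\mathfrak{S}_0(\Da)$ whose intersection again has block $\mathfrak{S}_0(\Da)$ must be the original extremal pair $\{\Da_0,\Da_{[b]}\}$, then (Lemma 3.6, ``ant'') characterizes the existence of a swapping automorphism by a ``line-type''/axis argument on the parallelotope, and only at the very end reads off the reduced-norm valuations. You instead compute directly in $\matrici_f(B)$: you write a putative swap element as $\lambda_1 DU$ with $U\in\mathrm{GL}_f(\oink_B)$, reduce $\pi^{-2R/f}DUD$ modulo $\pi$, and extract from invertibility of the reduction a permutation $\sigma$ with $r_i+r_{\sigma(i)}=2R/f$, whence the palindromic condition $r_i+r_{f+1-i}$ constant. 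This replaces the building-geometric Lemma ant with an elementary matrix/valuation computation, and the paper's Lemma pant with the dichotomy $N(\Da_\wp)=N^0\sqcup N^{\mathrm{sw}}$. Your route is more self-contained and arguably more transparent for a reader unfamiliar with buildings; the paper's route fits into the geometric framework it reuses in \S\S4--5 for representation fields and the product-complex picture.

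One step is stated too quickly. You assert that the unordered pair $\{\Da_{1,\wp},\Da_{2,\wp}\}$ is recovered from $\Da_\wp$ ``by uniqueness of invariant factors,'' but invariant factors are attached to an ordered pair of lattices, not to the intersection order $\Da_\wp$ alone; the claim you actually need is that $\Da_1\cap\Da_2=\Da_3\cap\Da_4$ (with all four maximal) forces $\{\Da_1,\Da_2\}=\{\Da_3,\Da_4\}$, and this is not a formal consequence of the invariant-factor theorem. It is true, and the paper devotes Lemma 3.4 to it (the pair is the unique opposite pair of vertices of the axis-parallel parallelotope $\mathfrak{S}_0(\Da)$), but your version needs either that geometric argument or a direct computation showing that $\Da_\wp$ determines the exponents $r_i$ together with the ``corner'' maximal orders. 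Without this the dichotomy $N(\Da_\wp)=N^0\sqcup N^{\mathrm{sw}}$ --- and hence everything downstream --- is unsupported. Everything else, including the reduction-mod-$\pi$ permutation argument, the normalization $n(\lambda I)=n_B(\lambda)^f$, and the final class-field-theory translation, is sound.
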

Note that the symmetry condition can be rewritten $r_m-r_{m-1}=r_{f-m+2}-r_{f-m+1}$, so that $r_m+r_{f-m+1}$
is independent of $m$. We conclude that $2(r_1+\dots+r_f)$ is always a multiple of $f=f_\wp(\alge/K)$. We conclude
that $\Sigma_0/\Sigma$ is always an exponent-2 extension.

\begin{cor}
The spinor class field for a GEO  in an odd dimensional CSA $\alge$
 over $K$ is the spinor class field $\Sigma_0$  of maximal orders. In particular, the number of
conjugacy classes in a genus of GEOs on an odd-dimensional CSA is independent of the genus.
\end{cor}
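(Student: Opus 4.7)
The plan is to combine Theorem~\ref{t11} with the remark immediately preceding the corollary. That remark shows that $\mathrm{Gal}(\Sigma_0/\Sigma)$ has exponent dividing $2$, since the symmetry identity $r_m+r_{f-m+1}=\mathrm{const}$ forces $2\rho_\wp\equiv 0\pmod{f_\wp(\alge/K)}$ at every finite place where $\Da_\wp$ is symmetric. The odd-dimensional hypothesis should then collapse this $2$-extension.

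To make the collapse precise, I would first recall from the introduction that $\mathrm{Gal}(\Sigma_0/K)$ has exponent $n$, being the Galois group of the maximal exponent-$n$ subextension of the wide Hilbert class field satisfying the stated local conditions. The subgroup $\mathrm{Gal}(\Sigma_0/\Sigma)$ therefore has exponent dividing both $n$ and $2$, hence dividing $\gcd(2,n)=1$ when $n$ is odd. So this group is trivial, and $\Sigma=\Sigma_0$.

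For the counting statement, properties (1) and (2) of $\rho$ provide a bijection between conjugacy classes in the genus $\mathbb{O}$ and $\mathrm{Gal}(\Sigma/K)$, the surjectivity onto this Galois group being part of the spinor class field formalism developed in \cite{spinor}. Hence the number of conjugacy classes equals $[\Sigma:K]=[\Sigma_0:K]$, a quantity depending only on the ambient algebra $\alge$ and not on the particular genus of GEOs. There is no real obstacle: once Theorem~\ref{t11} is available, the corollary reduces to the parity observation already made in the paragraph above its statement.
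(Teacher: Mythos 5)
Your proof is correct and matches the argument the paper leaves implicit. The remark immediately preceding the corollary establishes that $\mathrm{Gal}(\Sigma_0/\Sigma)$ has exponent dividing $2$, and since it is a subgroup of the exponent-$n$ group $\mathrm{Gal}(\Sigma_0/K)$, it is trivial when $n$ is odd; combining this with the surjectivity of the distance map $\rho$ (which, under the Eichler condition in force for $n>2$, identifies conjugacy classes in a genus with $\mathrm{Gal}(\Sigma/K)$) gives the count $[\Sigma_0:K]$, independent of the genus.
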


\section{The theory of spinor class field}

In this section we review the basic facts about spinor genera and spinor class fields of orders.
See \cite{spinor} for details. In all that follows $\alge$ is a CSA over a global field $K$ and $S$ a non-empty finite 
set of places of $K$ containing the archimedean  places if any.  Let $\Da$ be an $S$-order of maximal
rank in $\alge$, or as we say henceforth, an order on $\alge$. Let $\Ha$ be an arbitrary $S$-suborder of $\Da$. Completions at a place $\wp\in U=S^c$ are denoted $\Da_\wp$ or $\Ha_\wp$. Let $\Pi(K)$ be the set of all places in $K$, let $\ad\subseteq\prod_{\wp\in \Pi(K)}K_\wp$ be the adele ring of $K$, and let $\alge_\ad=\alge\otimes_K\ad$ be the adelization of the algebra. If $a=(a_\wp)_\wp\in\alge_\ad$ is an adelic element, we let $a\Da a^{-1}$ denote the order $ \Da'$ defined locally by $\Da'_\wp=a_\wp\Da_\wp a_\wp^{-1}$ at finite places.

Since any two maximal orders are locally conjugate at all places, if we fix a maximal order $\Da$, any other maximal order on $\alge$ has the form $\Da'=a\Da a^{-1}$ for some adelic element $a\in\alge_\ad^*$. More generally, it is said that
two orders $\Da$ and $\Da'$, on $\alge$,  are in the same genus if $\Da'=a\Da a^{-1}$ for some adelic element $a$.
 We say that $\Da$ and $\Da'$ are in the same spinor genus if $a$ can be chosen of the form $a=bc$ where $b\in\alge$ and $N(c)=1_\ad$, where $N:\alge_\ad^*\rightarrow \ad^*=:J_K$ is the reduced norm on adeles. We write $\Da'\in\mathrm{Spin}(\Da)$. The spinor class field $\Sigma=\Sigma(\Da)$ is defined as the class field corresponding to the group $K^*H(\Da)\subseteq J_K$, where
$$H(\Da)=\{N(a)|a\in\alge_\ad^*,\  a\Da a^{-1}=\Da\}.$$
Let $t\mapsto [t,\Sigma/K]$ denote the Artin map on ideles.
The distance between the orders $\Da$ and $\Da'\in\mathrm{gen}(\Da)$ is the element
$\rho(\Da,\Da')\in\mathrm{Gal}(\Sigma/K)$ defined by $\rho(\Da,\Da')=[N(a),\Sigma/K]$, for any adelic element $a\in\alge_\ad^*$
satisfying $\Da'=a\Da a^{-1}$. Note that this implies $\rho(\Da,\Da'')=\rho(\Da,\Da')\rho(\Da',\Da'')$ for any triple $(\Da,\Da',\Da'')$ of orders in a genus $\mathbb{O}$. Two orders are in the same spinor genus if and only if their distance is trivial.
One important property of spinor genera is that they coincide with conjugacy classes whenever strong approximation holds,
so in this case two orders are conjugate if and only if their distance is trivial.
 In the present setting, strong approximation is equivalent to the following property:
\begin{quote} \textbf{Generalized Eichler Condition (GEC):} There exists a place $\wp\in S$ such that the local
 algebra $\alge_\wp$ is not a division algebra.
\end{quote}
In the particular case where $K$ is a number field, and $S$ is the set of archimedean places, GEC reduces to the better known
Eichler Condition:
\begin{quote} \textbf{Eichler Condition (EC):} Either $n>2$ or $\alge$ is unramified  at some archimedean place.
\end{quote}

Note for future reference that $H(\Da)=J_K\cap\prod_{\wp\in\Pi(K)}H_\wp(\Da)$, where $$H_\wp(\Da)=\{N(a)|a\in\alge_\wp^*,\  a\Da_\wp a^{-1}=\Da_\wp\}.$$ The sets $H(\Da)$ and $H_\wp(\Da)$ are called global and local spinor image, respectively. When $k$ is an arbitrary local field, we also write $H_k(\Ea)$ for the spinor image of a local order
$\Ea$, which is defined analogously. 

\section{blocks in Weil apartments.}
 
In all of this section, let $k$ be a local field, and let $B$ be a central division $k$-algebra with uniformizing parameter $\pi$. 
Let $\mathfrak{B}$ be the Bruhat-Tits building (or BT-building) associated to $\mathrm{PGL}_n(B)$ as defined in \cite{build}.
Recall that the vertices of $\mathfrak{B}$ are in one to one correspondence with the maximal orders in 
$\matrici_n(B)$. An apartment is the maximal sub-complex whose vertices correspond to maximal orders containing a fixed conjugate of the order $\mathfrak{P}=\bigoplus_{i=1}^n\oink_B E_{i,i}$ of integral diagonal matrices, where $\{E_{i,j}\}_{i,j}$ is the canonical $B$-basis of $\matrici_n(B)$. 
Consider the apartment $A_0$ corresponding to $\mathfrak{P}$, which we call the standard apartment.
Note that the set of maximal orders in $A_0$  is in correspondence with the homothety classes of left fractional $\mathfrak{P}$-ideals in $k\mathfrak{P}$.
 In other words they are the stabilizers $\Da_{(a_1,\dots,a_n)}$ of the lattices of the form 
$\pi^{a_1}e_i\oink_B+\cdots+\pi^{a_n}e_n\oink_B$, where $\{e_1,\dots,e_n\}$ is the cannonical basis of
the column space $B^f$, and $\stackrel\rightarrow{a}=(a_1,\dots,a_n)\in\enteri^n$.
Let $\stackrel{\rightarrow}u=(1,\dots,1)\in\enteri^n$. Since it is clear that
$\Da_{\stackrel{\rightarrow}a+m\stackrel{\rightarrow}u}=\Da_{\stackrel{\rightarrow}a}$ 
for any $\stackrel{\rightarrow}a\in\enteri^n$ and any $m\in\enteri$, we also use the notation
$$\Da_{(a_1,\dots,a_n)}=:\Da_{[a_2-a_1,\dots,a_n-a_{n-1}]}.$$
Note that the sub-index on the right can be seen as an element in the quotient group
$\Gamma=\enteri^n/\langle\stackrel{\rightarrow}u\rangle$. Elements of $\Gamma$ are denoted
in brackets, e.g., $[b]$ and $[d]$, in all that follows. Furthermore, if $[b]=[b_1,\dots,b_{n-1}]$
and $\stackrel{\rightarrow}a=(a_1,\dots,a_n)$, we write
$[b]= \stackrel{\rightarrow}a+\langle\stackrel{\rightarrow}u\rangle$, if $b_i=a_{i+1}-a_i$ for every
$i=1,\dots,n-1$. On $\Gamma$ we define the total length
function $$\Big|\Big|[b_1,b_2,\dots,b_n]\Big|\Big|=|b_1|+|b_2|+\dots+|b_n|.$$ 
Furthermore, the permutation group $S_n$ acts naturally on the order
$\mathfrak{P}$ and its generated $K$-algebra.
This define a natural action of $S_n$ on the group of fractional ideals of $\mathfrak{P}$ that can be
interpreted as an action on the vertices of the chamber.
\begin{example} If $n=5$, the permutation $\sigma=(12)(345)$ satisfies $$\sigma(\Da_{[1,2,3,4]})=
\sigma(\Da_{(0,1,3,6,10)})=\Da_{(1,0,10,3,6)}=\Da_{[-1,10,-7,3]}.$$
\end{example}
\begin{example}
If $n=3$, the orbit of $\Da_{[2,1]}$ is the set
 $$\left\{\Da_{[2,1]},\Da_{[3,-1]},\Da_{[-2,3]},\Da_{[1,-3]},
\Da_{[-3,2]},\Da_{[-1,-2]}\right\}.$$
\end{example}
Next result is immediate from the definition:

\begin{lemma}
$\Da_0$ is the only vertex in the standard apartment stabilized by the whole of $S_n$.
Every $S_n$-orbit contains a unique order of the form $\Da_{[b_1,\dots,b_{n-1}]}$ with
$b_1,\dots,b_{n-1}\geq0$.
\end{lemma}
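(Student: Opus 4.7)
The plan rests on translating the $S_n$-action directly to its description on vectors in $\enteri^n$. First I would observe that $\sigma(\Da_{\stackrel{\rightarrow}a})=\Da_{\sigma(\stackrel{\rightarrow}a)}$, so the relation $\sigma(\Da_{\stackrel{\rightarrow}a})=\Da_{\stackrel{\rightarrow}a}$ is equivalent to $\sigma(\stackrel{\rightarrow}a)=\stackrel{\rightarrow}a+m_\sigma\stackrel{\rightarrow}u$ for some integer $m_\sigma$ depending on $\sigma$. This reformulation reduces both claims to elementary combinatorics on integer vectors modulo $\langle\stackrel{\rightarrow}u\rangle$.

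For the first assertion, I would specialize to transpositions $\sigma=(i\,j)$ with $i<j$. The difference $\sigma(\stackrel{\rightarrow}a)-\stackrel{\rightarrow}a$ is supported only in positions $i$ and $j$; for $n\geq 3$ the equation $\sigma(\stackrel{\rightarrow}a)-\stackrel{\rightarrow}a=m_\sigma\stackrel{\rightarrow}u$ therefore forces $m_\sigma=0$ and $a_i=a_j$, while for $n=2$ the same relation gives $a_j-a_i=m_\sigma=a_i-a_j$, again yielding $a_i=a_j$. Since every pair of coordinates is exchanged by some transposition in $S_n$, all entries of $\stackrel{\rightarrow}a$ coincide, so $\stackrel{\rightarrow}a\in\langle\stackrel{\rightarrow}u\rangle$ and $\Da_{\stackrel{\rightarrow}a}=\Da_0$.

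For the second assertion, I would rewrite the class $[b]=[b_1,\dots,b_{n-1}]$ by the representative $\stackrel{\rightarrow}a=(0,b_1,b_1+b_2,\dots,b_1+\cdots+b_{n-1})\in\enteri^n$, so that the condition $b_1,\dots,b_{n-1}\geq0$ becomes $a_1\leq a_2\leq\cdots\leq a_n$. Existence is then immediate: given an arbitrary $[b]$ lifted to $\stackrel{\rightarrow}a$, a suitable $\sigma\in S_n$ rearranges $\stackrel{\rightarrow}a$ into non-decreasing order, producing a representative of the orbit of the required form. For uniqueness, I would suppose $\stackrel{\rightarrow}a$ and $\stackrel{\rightarrow}{a}'$ are two non-decreasing representatives of the same $S_n$-orbit in $\Gamma$. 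Then $\stackrel{\rightarrow}{a}'=\sigma(\stackrel{\rightarrow}a)+m\stackrel{\rightarrow}u$ for some $\sigma\in S_n$ and $m\in\enteri$, so the multisets of entries of $\stackrel{\rightarrow}{a}'$ and of $\stackrel{\rightarrow}a+m\stackrel{\rightarrow}u$ coincide; since both are already sorted in non-decreasing order, they must agree coordinatewise, giving $\stackrel{\rightarrow}{a}'=\stackrel{\rightarrow}a+m\stackrel{\rightarrow}u$ and hence the same class in $\Gamma$.

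I do not anticipate any genuine obstacle: both parts are essentially bookkeeping on the identification $\Gamma=\enteri^n/\langle\stackrel{\rightarrow}u\rangle$ together with a sorting argument. The only subtlety to flag carefully is the intrusion of the integer shift $m\stackrel{\rightarrow}u$ coming from the quotient, which must be tracked in both arguments to rule out a spurious identification of distinct vertices.
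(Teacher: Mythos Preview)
Your argument is correct. The paper itself offers no proof of this lemma, stating only that it ``is immediate from the definition''; your proposal is a faithful and careful unpacking of exactly that observation, via the identification of the vertex set with $\Gamma=\enteri^n/\langle\stackrel{\rightarrow}u\rangle$ together with a sorting argument, so there is nothing to compare.
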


We call either, an element $[b_1,\dots,b_{n-1}]$ with $b_1,\dots,b_{n-1}\geq0$, or the corresponding order $\Da_{[b_1,\dots,b_{n-1}]}$, totally positive. Note that $[b]= \stackrel{\rightarrow}a+\langle\stackrel{\rightarrow}u\rangle$
is totally positive if and only if $\stackrel{\rightarrow}a=(a_1,\dots,a_n)$ is an increasing sequence.

\begin{lemma}
Assume $[b]$ is totally positive. 
Then the maximal orders containing the generalized Eichler
order $\Da=\Da_0\cap\Da_{[b]}$ are exactly the orders 
$\Da_{[c]}$ with $[c]=[c_1,\dots,c_{n-1}]$ and
$0\leq c_i\leq b_i$ for every $i=1,\dots,n-1$.
\end{lemma}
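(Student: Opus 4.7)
My approach is to exploit the diagonal matrix units $E_{11}, \ldots, E_{nn}$, which all lie in $\Da = \Da_0 \cap \Da_{[b]}$, in order to force any maximal overorder to lie in the standard apartment, and then to read off the inequalities on $[c]$ from an explicit matrix computation.

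First I would record the matrix form of $\Da$. Choose the representative $\Lambda_{\vec a} = \bigoplus_i \pi^{a_i} e_i\oink_B$ for $\Da_{[b]}$, where $a_1 = 0$ and $a_{i+1} - a_i = b_i$, so that $(a_1, \ldots, a_n)$ is non-decreasing by total positivity. In the basis $\{e_i\}$ one has $\Da_0 = \matrici_n(\oink_B)$ and $\Da_{[b]} = \{(x_{ij}) : x_{ij} \in \pi^{a_i - a_j}\oink_B\}$, hence
$$\Da = \bigl\{(x_{ij}) \,\bigm|\, x_{ij} \in \pi^{\max(0,\, a_i - a_j)} \oink_B\bigr\}.$$
In particular each $E_{ii}$ belongs to $\Da$.

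Next, given a maximal order $\Da' = \Da_N$ containing $\Da$, I would use the orthogonal idempotents $E_{ii}$ (which now lie in $\Da'$ and sum to $1$) to split the lattice as $N = \bigoplus_i E_{ii} N$. Each summand $E_{ii} N$ is a nonzero finitely generated right $\oink_B$-submodule of $e_i B \cong B$, and is therefore of the form $\pi^{c_i} e_i \oink_B$ for some $c_i \in \enteri$. Consequently $N = \bigoplus_i \pi^{c_i} e_i \oink_B$ and $\Da' = \Da_{(c_1, \ldots, c_n)}$ already sits in the standard apartment.

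Finally, I would translate the containment $\Da \subseteq \Da_{(c_1,\ldots,c_n)}$ into inequalities. Since $\Da_{(c_1,\ldots,c_n)} = \{(x_{ij}) : x_{ij} \in \pi^{c_i - c_j}\oink_B\}$, the containment forces $c_i - c_j \leq \max(0, a_i - a_j)$ for all $i, j$. Applied to the adjacent pairs $(k, k+1)$ and $(k+1, k)$ this yields $0 \leq c_{k+1} - c_k \leq b_k$, and a telescoping argument shows that these adjacent inequalities imply the rest. Passing to the representative $[c] = [c_2 - c_1, \ldots, c_n - c_{n-1}] \in \Gamma$ gives the stated range, and the reverse implication follows by running the same computation backwards. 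The main obstacle is the middle step, the idempotent decomposition: one must verify cleanly that $N$ really does split as a direct sum of its rank-one pieces $E_{ii} N$, so that the maximal over-order is forced to be diagonal in the basis $\{e_i\}$ up to homothety.
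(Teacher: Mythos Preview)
Your proof is correct and follows essentially the same approach as the paper: the paper observes that $\mathfrak{P}=\bigoplus_i \oink_B E_{ii}\subseteq\Da$ forces any maximal overorder into the standard apartment (by the very definition of $A_0$), and then reads off the inequalities from the explicit matrix description of $\Da_{[b]}$. Your idempotent decomposition of $N$ is just an explicit unpacking of the first step, and your computation of the constraints on $c_{k+1}-c_k$ is the same as the paper's, only spelled out in both directions.
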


\begin{proof}
First note that both $\Da_0$ and $\Da_{[b]}$ contain the order $\mathfrak{P}$
of integral diagonal matrices, so the same hold for every order containing their intersection. 
We conclude that every such maximal order is in the apartment defined by $\mathfrak{P}$.
Next, choose $a_1,\dots,a_n$ and $d_1,\dots,d_n$, in a way that $a_{i+1}=a_i+b_i$ and $d_{i+1}=d_i+c_i$
for every $i=1,\dots,n-1$. Then $a_j-a_i\geq d_j-d_i$ for every pair $(i,j)$ with $1\leq i<j\leq n$. The
result follows if we observe that $$\Da_{[b]}=\left(\begin{array}{ccccc}
\oink_K&\pi^{a_1-a_2}\oink_K&\pi^{a_1-a_3}\oink_K&\cdots&\pi^{a_1-a_n}\oink_K\\
\pi^{a_2-a_1}\oink_K&\oink_K&\pi^{a_2-a_3}\oink_K&\cdots&\pi^{a_2-a_n}\oink_K\\
\pi^{a_3-a_1}\oink_K&\pi^{a_3-a_2}\oink_K&\oink_K&\cdots&\pi^{a_3-a_n}\oink_K\\
\vdots&\vdots&\vdots&\ddots&\vdots\\
\pi^{a_n-a_1}\oink_K&\pi^{a_n-a_2}\oink_K&\pi^{a_n-a_3}\oink_K&\cdots&\oink_K\end{array}\right),$$
and a similar formula holds for every order in the apartment.
\end{proof}

In what follows we denote by $\mathfrak{S}_0(\Ha)$, for every order $\Ha$, the maximal sub-complex $\mathfrak{S}$ of the BT-building $\mathfrak{B}$ such that every vertex of $\mathfrak{S}$ corresponds to a maximal order containing $\Ha$,
and call it the block of $\Ha$. 
Note that if $\Ha'$ is the intersection of all maximal orders containing $\Ha$,
then $\mathfrak{S}_0(\Ha)=\mathfrak{S}_0(\Ha')$. We let $S_0(\Ha)$ denote the set of vertices of $\mathfrak{S}_0(\Ha)$.

\begin{example}
The cell-complexes $\mathfrak{S}_0(\Da')$, for $\Da'$ in the $S_3$-orbit of $\Da=\Da_0\cap\Da_{[2,1]}$, are 
as described in Table 2.
\begin{table}
$$
\unitlength 1mm 
\linethickness{0.4pt}
\ifx\plotpoint\undefined\newsavebox{\plotpoint}\fi 
\begin{tabular}{|c|c|c|c|} \hline
$\sigma$&$\mathrm{Id}$&$(2\ 3)$&$(1\ 2)$\\ \hline
\begin{picture}(20,26)(0,0)
\put(9,13){\makebox(0,0)[cc]{$S_0\Big(\sigma(\Da)\Big)$}}
\end{picture}&
\begin{picture}(26,26)(0,0)
\put(12,0){\vector(0,3){25}} \put(0,12){\vector(3,0){25}}
\put(12.2,12){\circle*{.7}} \put(12.2,16){\circle*{.7}}
\put(16.2,12){\circle*{.7}} \put(16.2,16){\circle*{.7}}
\put(20.2,12){\circle*{.7}} \put(20.2,16){\circle*{.7}}
\put(16.2,16){\line(0,-1)4}\put(12.2,16){\line(1,0)4}
\put(16.2,16){\line(1,-1)4}\put(12.2,16){\line(1,-1)4}
\put(20.2,16){\line(0,-1)4}\put(16.2,16){\line(1,0)4}
\put(15.3,15){\makebox(0,0)[cc]{\tiny{$b$}}}
\put(13.3,13){\makebox(0,0)[cc]{\tiny{$a$}}}
\put(19.3,15){\makebox(0,0)[cc]{\tiny{$d$}}}
\put(17.3,13){\makebox(0,0)[cc]{\tiny{$c$}}}
\end{picture}&
\begin{picture}(26,26)(0,0)
\put(12,0){\vector(0,3){25}} \put(0,12){\vector(3,0){25}}
\put(12.2,12){\circle*{.7}} \put(16.2,8){\circle*{.7}}
\put(16.2,12){\circle*{.7}} \put(20.2,8){\circle*{.7}}
\put(20.2,12){\circle*{.7}} \put(24.2,8){\circle*{.7}}
\put(16.2,12){\line(0,-1)4}\put(20.2,12){\line(0,-1)4}
\put(16.2,8){\line(1,0)4}\put(20.2,8){\line(1,0)4}
\put(12.2,12){\line(1,-1)4}\put(16.2,12){\line(1,-1)4}\put(20.2,12){\line(1,-1)4}
\put(15.3,11){\makebox(0,0)[cc]{\tiny{$a$}}}
\put(17.3,9){\makebox(0,0)[cc]{\tiny{$b$}}}
\put(19.3,11){\makebox(0,0)[cc]{\tiny{$c$}}}
\put(21.3,9){\makebox(0,0)[cc]{\tiny{$d$}}}
\end{picture}&
\begin{picture}(26,26)(0,0)
\put(12,0){\vector(0,3){25}} \put(0,12){\vector(3,0){25}}
\put(12.2,12){\circle*{.7}} \put(12.2,16){\circle*{.7}}
\put(8.2,16){\circle*{.7}} \put(8.2,20){\circle*{.7}}
\put(4.2,20){\circle*{.7}} \put(4.2,24){\circle*{.7}}
\put(8.2,20){\line(0,-1)4}\put(4.2,24){\line(0,-1)4}
\put(8.2,16){\line(1,0)4}\put(4.2,20){\line(1,0)4}
\put(8.2,20){\line(1,-1)4}\put(4.2,24){\line(1,-1)4}\put(8.2,16){\line(1,-1)4}\put(4.2,20){\line(1,-1)4}
\put(11.3,15){\makebox(0,0)[cc]{\tiny{$a$}}}
\put(9.3,17){\makebox(0,0)[cc]{\tiny{$b$}}}
\put(7.3,19){\makebox(0,0)[cc]{\tiny{$c$}}}
\put(5.3,21){\makebox(0,0)[cc]{\tiny{$d$}}}
\end{picture}\\  \hline
\end{tabular}
$$
$$
\unitlength 1mm 
\linethickness{0.4pt}
\ifx\plotpoint\undefined\newsavebox{\plotpoint}\fi 
\begin{tabular}{|c|c|c|c|} \hline
$\sigma$&$(1\ 3\ 2)$&$(1\ 2\ 3)$&$(1\ 3)$\\ \hline
\begin{picture}(20,26)(0,0)
\put(9,13){\makebox(0,0)[cc]{$S_0\Big(\sigma(\Da)\Big)$}}
\end{picture}&
\begin{picture}(26,26)(0,0)
\put(12,0){\vector(0,3){25}} \put(0,12){\vector(3,0){25}}
\put(12.2,12){\circle*{.7}} \put(12.2,8){\circle*{.7}}
\put(12.2,4){\circle*{.7}} \put(16.2,8){\circle*{.7}}
\put(16.2,4){\circle*{.7}} \put(16.2,0){\circle*{.7}}
\put(16.2,8){\line(0,-1)4}\put(16.2,4){\line(0,-1)4}
\put(12.2,8){\line(1,0)4}\put(12.2,4){\line(1,0)4}
\put(12.2,12){\line(1,-1)4}\put(12.2,8){\line(1,-1)4}\put(12.2,4){\line(1,-1)4}
\put(13.3,9){\makebox(0,0)[cc]{\tiny{$a$}}}
\put(15.3,7){\makebox(0,0)[cc]{\tiny{$b$}}}
\put(13.3,5){\makebox(0,0)[cc]{\tiny{$c$}}}
\put(15.3,3){\makebox(0,0)[cc]{\tiny{$d$}}}
\end{picture}&
\begin{picture}(26,26)(0,0)
\put(12,0){\vector(0,3){25}} \put(0,12){\vector(3,0){25}}
\put(12.2,12){\circle*{.7}} \put(8.2,12){\circle*{.7}}
\put(8.2,16){\circle*{.7}} \put(4.2,16){\circle*{.7}}
\put(4.2,20){\circle*{.7}} \put(0.2,20){\circle*{.7}}
\put(8.2,16){\line(0,-1)4}\put(4.2,20){\line(0,-1)4}
\put(0.2,20){\line(1,0)4}\put(4.2,16){\line(1,0)4}
\put(4.2,16){\line(1,-1)4}\put(8.2,16){\line(1,-1)4}\put(0.2,20){\line(1,-1)4}\put(4.2,20){\line(1,-1)4}
\put(9.3,13){\makebox(0,0)[cc]{\tiny{$a$}}}
\put(7.3,15){\makebox(0,0)[cc]{\tiny{$b$}}}
\put(5.3,17){\makebox(0,0)[cc]{\tiny{$c$}}}
\put(3.3,19){\makebox(0,0)[cc]{\tiny{$d$}}}
\end{picture}&
\begin{picture}(26,26)(0,0)
\put(12,0){\vector(0,3){25}} \put(0,12){\vector(3,0){25}}
\put(12.2,12){\circle*{.7}} \put(8.2,12){\circle*{.7}}
\put(12.2,8){\circle*{.7}} \put(8.2,8){\circle*{.7}}
\put(12.2,4){\circle*{.7}} \put(8.2,4){\circle*{.7}}
\put(8.2,12){\line(0,-1)4}\put(8.2,8){\line(0,-1)4}
\put(8.2,12){\line(1,-1)4}\put(8.2,8){\line(1,-1)4}
\put(8.2,8){\line(1,0)4}\put(8.2,4){\line(1,0)4}
\put(11.3,11){\makebox(0,0)[cc]{\tiny{$a$}}}
\put(9.3,9){\makebox(0,0)[cc]{\tiny{$b$}}}
\put(11.3,7){\makebox(0,0)[cc]{\tiny{$c$}}}
\put(9.3,5){\makebox(0,0)[cc]{\tiny{$d$}}}
\end{picture}\\  \hline
\end{tabular}$$
\caption{An orbit of blocks under the action of the symmetric group.}\end{table}
\end{example}

\begin{lemma}\label{ttt}
If $b$ is totally positive, for any $\sigma\in S_n$, we have $||[b]||\leq ||\sigma[b]||$, with equality if and only if
$\sigma[b]\in\{[b],[b]^*\}$, where 
 \begin{equation}\label{bst}
[b]^*=[-b_{n-1},\dots,-b_2,-b_1].\end{equation}
Furthermore,  the complex $\mathfrak{S}([b])=\mathfrak{S}_0\big(\sigma(\Da)\big)$, where
$\Da=\Da_0\cap\Da_{[b]}$ is a paralelotope whose edges are parallel to the  axes if and only if
$\sigma[b]\in\{[b],[b]^*\}$.
 \end{lemma}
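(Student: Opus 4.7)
I would work with the canonical increasing representative
$\vec{a}=(0, b_1, b_1+b_2, \ldots, b_1+\cdots+b_{n-1})\in\enteri^n$
of $[b]$, so that $||[b]||=a_n-a_1$. Since $S_n$ acts on $\Gamma$ by coordinate permutation, $(\sigma\vec{a})_j = a_{\sigma^{-1}(j)}$, and
$$||\sigma[b]||=\sum_{k=1}^{n-1}\bigl|a_{\sigma^{-1}(k+1)}-a_{\sigma^{-1}(k)}\bigr|$$
is exactly the total variation of the permuted sequence. Because that sequence uses the same multiset of values as $\vec a$, its total variation is bounded below by the range $a_n-a_1=||[b]||$, with equality if and only if the sequence is monotonic. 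Since $\vec a$ is already sorted, the only monotonic rearrangements give, as elements of $\Gamma$, precisely the increasing class $[b]$ and the decreasing class, which by reading off successive differences is $[b]^*=[-b_{n-1},\ldots,-b_1]$; this settles the first assertion.

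For the second, Lemma 3.2 identifies $\mathfrak{S}_0(\Da)$ with the axis-aligned box $\{\Da_{[c]} : 0 \leq c_i \leq b_i\}$ whose edges run along the standard basis vectors $e_1,\ldots,e_{n-1}$ of $\Gamma$. Since $S_n$ acts on the Bruhat--Tits building by simplicial automorphisms, $\mathfrak{S}_0(\sigma(\Da))=\sigma\,\mathfrak{S}_0(\Da)$, and the linearity of $\sigma$ on $\Gamma$ makes the image the parallelotope spanned by $b_1\sigma(e_1),\ldots,b_{n-1}\sigma(e_{n-1})$. It is axis-aligned if and only if for every $i$ with $b_i>0$ one has $\sigma(e_i)=\pm e_l$ for some $l$. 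A short computation --- noting that $e_i$ is represented by the $0$-$1$ tuple jumping between positions $i$ and $i+1$ --- shows this happens exactly when $\sigma(\{1,\ldots,i\})$ is either $\{1,\ldots,i\}$ or $\{n-i+1,\ldots,n\}$.

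To close the argument, I would observe that the two options cannot be mixed: if $\sigma(\{1,\ldots,i\})=\{1,\ldots,i\}$ and $\sigma(\{1,\ldots,i'\})=\{n-i'+1,\ldots,n\}$ with $i<i'\leq n-1$, then applying $\sigma$ to $\{1,\ldots,i\}\subseteq\{1,\ldots,i'\}$ yields $\{1,\ldots,i\}\subseteq\{n-i'+1,\ldots,n\}$, forcing $i'\geq n$, contradiction. A single branch is therefore imposed across all indices with $b_i>0$, whence $\sigma\vec a$ is either non-decreasing or non-increasing; uniqueness of monotonic rearrangements of a multiset then yields $\sigma[b]=[b]$ or $\sigma[b]=[b]^*$ respectively. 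The most delicate point will be the degenerate case in which several $b_i$ vanish, since there the combinatorial condition is imposed only at the jumps of $\vec a$ and extra permutations can preserve axis-alignment; the no-mixing step above is precisely what is needed to force the conclusion back into the two classes $\{[b],[b]^*\}$.
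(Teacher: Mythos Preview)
Your proof is correct and follows the same approach as the paper: both interpret $||\sigma[b]||$ as the total variation of the permuted sequence $\sigma\vec a$ (minimized exactly by the monotone rearrangements, giving $[b]$ and $[b]^*$) and analyze axis-alignment of $\mathfrak{S}_0(\sigma(\Da))$ via the images $\sigma(e_i)$ of the unit edges. Your explicit ``no-mixing'' step in fact spells out a point the paper's proof leaves implicit; for completeness you should also record the symmetric case $i'<i$, which is handled by the reverse containment $\sigma(\{1,\dots,i'\})\subseteq\sigma(\{1,\dots,i\})$ forcing $\{n-i'+1,\dots,n\}\subseteq\{1,\dots,i\}$ and hence $i\geq n$.
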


 \begin{proof}
Note that if $[b]=\stackrel{\rightarrow}a+\langle\stackrel{\rightarrow}u\rangle$, the total
length of $[b]$ is the total variation of the sequence $\stackrel{\rightarrow}a=(a_1,\dots,a_n)$. The first inequality
 follows. Furthermore, the orbit of an element $[b]=[0,\dots,0,1,0,\dots,0]$ of length $1$ contains exactly two
vectors of that minimal length, namely the ones corresponding to an increasing and a decreasing sequence in the orbit
of $\stackrel{\rightarrow}a$. This proves that every complex $\mathfrak{S}(\sigma[b])$
in the orbit of $\mathfrak{S}([b])$, other than $\mathfrak{S}([b])$ or  $\mathfrak{S}([b]^*)$,
is a paralelotope whose edges are not parallel to the axes, so in particular $\sigma[b]$ is strictly larger than $[b]$.
 \end{proof}

Note that the correspondence $[b]\mapsto [b]^*$, as in (\ref{bst}), has the following properties:
\begin{itemize}
\item $\Da_{[b]^*}=\tau(\Da_{[b]})$, where $\tau=(1\ n)(2\ n-1)\cdots$
is the permutation reversing the $n$-tuple $(1,2,\dots,n)$,
\item the order $\Da_0\cap\Da_{[b]}$ is symmetric if and only if $[b]^*=-[b]$.
\end{itemize}
 The correspondence $\Ha\mapsto S_0(\Ha)$ reverse inclusions, so that for every pair
of elements $[c]$ and $[d]$ in $\Gamma$, with $\Da_{[c]},
\Da_{[d]}\in S_0(\Da)$, their intersection $\Da'=\Da_{[c]}\cap
\Da_{[d]}$ satisfies $S_0(\Da)\supseteq S_0(\Da')$. In fact, a stronger statement is true.

\begin{lemma}
Let $\Da=\Da_0\cap\Da_{[b]}$ be as above.
If $\Da_{[c]},
\Da_{[d]}\in S_0(\Da)$, satisfy $S_0(\Da)= S_0\big(\Da_{[c]}\cap
\Da_{[d]}\big)$, then $\left([c],[d]\right)=
(0,[b])$ or $\left([d],[c]\right)=
(0,[b])$.
\end{lemma}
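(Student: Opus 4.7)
The plan is to put $\Da_{[c]}\cap\Da_{[d]}$ into standard position by translating $[c]$ to the origin, then use Lemma~\ref{ttt} together with the preceding lemma (which identifies $S_0(\Da)$ with the box $\prod_{i=1}^{n-1}[0,b_i]$) to force $\{[c],[d]\}=\{0,[b]\}$.

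First, since $\Da_{[c]},\Da_{[d]}$ automatically lie in $S_0(\Da_{[c]}\cap\Da_{[d]})=S_0(\Da)$, the previous lemma gives $[c],[d]\in \prod_{i=1}^{n-1}[0,b_i]$. I would then observe that conjugation by a diagonal matrix with valuation vector $-\vec c\,'$ (where $\vec c\,'$ is any lift of $[c]$ to $\enteri^n$) sends the pair $(\Da_{[c]},\Da_{[d]})$ to $(\Da_0,\Da_{[d-c]})$ and acts on the apartment as the translation $[x]\mapsto [x]-[c]$ on $\Gamma$. Therefore
$$S_0\bigl(\Da_0\cap\Da_{[d-c]}\bigr)=S_0(\Da)-[c]=\prod_{i=1}^{n-1}[-c_i,b_i-c_i],$$
which is axis-aligned. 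Letting $[e]$ denote the unique totally positive element of the $S_n$-orbit of $[d-c]$, so that $\Da_0\cap\Da_{[d-c]}$ is $S_n$-conjugate to $\Da_0\cap\Da_{[e]}$, Lemma~\ref{ttt} forces $[d-c]\in\{[e],[e]^*\}$.

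In the case $[d-c]=[e]$, the preceding lemma identifies the block with $\prod_{i=1}^{n-1}[0,e_i]$; comparing with the displayed box gives $c_i=0$ and $e_i=b_i$, so $[c]=0$ and $[d]=[b]$. In the case $[d-c]=[e]^*$, the property $\Da_{[e]^*}=\tau(\Da_{[e]})$ (noted right after Lemma~\ref{ttt}) conjugates the block to $\tau\bigl(\prod[0,e_i]\bigr)=\prod_{i=1}^{n-1}[-e_{n-i},0]$; comparison now forces $c_i=b_i=e_{n-i}$, hence $[c]=[b]$ and $[d]=0$. I expect the main delicate point to be keeping the two bookkeeping operations straight --- namely how diagonal conjugation translates the apartment and how the $S_n$-action transports blocks --- since these transformations are what let Lemma~\ref{ttt} rigidly pin the axis-aligned box down to the two extreme configurations $\{[c],[d]\}=\{0,[b]\}$.
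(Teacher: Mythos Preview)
Your proof is correct and follows essentially the same route as the paper's: translate by $-[c]$ via diagonal conjugation, observe the resulting block is axis-aligned, invoke Lemma~\ref{ttt} to force $[d]-[c]\in\{[e],[e]^*\}$, and then read off $\{[c],[d]\}=\{0,[b]\}$. The paper opens with the total-length observation $\big|\big|[c]-[d]\big|\big|\leq\big|\big|[b]\big|\big|$ and phrases the conclusion more tersely (``either $[d]-[c]$ or $[c]-[d]$ is totally positive; the result follows''), whereas you carry out the two box-comparison cases explicitly, but the underlying argument is the same.
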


 \begin{proof}
Observe that $\big|\big|[c]-[d]\big|\big|\leq \big|\big|[b]\big|\big|$, with equality if and only if
$\Da_{[c]}$ and $\Da_{[d]}$ are opposite vertices of $S_0(\Da)$. Conjugation by the diagonal matrix
$\mathrm{diag}(1,\pi^{c_1},\dots,\pi^{c_1+\dots+c_{n-1}})$ takes $\Da_{[t]}$ to $\Da_{[t]-[c]}$ for every $[t]\in\Gamma$. Note that there exists a permutation $\sigma\in S_n$ taking $[d]-[c]$ to a totally positive element $[r]$.
Since $S_n$ acts linearly on $\Gamma$, the cell-complex $\mathfrak{S}_0\left(\Da_{[c]}\cap\Da_{[d]}\right)$
is a parallelotope having $[c]$ and $[d]$ as opposite vertices. Furthermore, by hypotheses
$\mathfrak{S}_0\big(\Da_{[c]}\cap\Da_{[d]}\big)$ is a parallelotope whose edges are parallel to the coordinate axes. 
It follows from Lemma \ref{ttt} that $[d]-[c]\in\{[r],[r]^*\}$, since $[r]$ is totally positive. We conclude that either 
$[d]-[c]$ or $[c]-[d]$ is totally positive. The result follows.
\end{proof}

\begin{lemma}\label{pant}
Let $\Da$ be as above.
Let $\mu$ be an automorphism of $\alge$ satisfying $\mu\big(S_0(\Da)\big)= S_0\big(\Da)$. Then 
$$\left\{\mu(\Da_0),\mu\left(\Da_{[b]}\right)\right\}=
\left\{\Da_0,\Da_{[b]}\right\}.$$
\end{lemma}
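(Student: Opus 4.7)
The plan is to reduce this to the preceding lemma (the one saying that if $\Da_{[c]},\Da_{[d]}\in S_0(\Da)$ and $S_0(\Da)=S_0(\Da_{[c]}\cap\Da_{[d]})$, then the pair $([c],[d])$ is, up to order, $(0,[b])$). The content of that lemma is exactly the characterization of $\Da_0$ and $\Da_{[b]}$ among vertices of the block as the unique pair of ``opposite corners'' recovering the whole block.

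First I would establish the compatibility $S_0(\mu(\Ha))=\mu\bigl(S_0(\Ha)\bigr)$ for any order $\Ha$ and any algebra automorphism $\mu$. This is completely formal: $\Da_{[e]}\supseteq\mu(\Ha)$ iff $\mu^{-1}(\Da_{[e]})\supseteq\Ha$, and $\mu$ (being an algebra automorphism) sends maximal orders to maximal orders. Applying this to $\Ha=\Da=\Da_0\cap\Da_{[b]}$ together with the hypothesis $\mu\bigl(S_0(\Da)\bigr)=S_0(\Da)$ yields
\[
S_0\bigl(\mu(\Da_0)\cap\mu(\Da_{[b]})\bigr)=S_0\bigl(\mu(\Da)\bigr)=\mu\bigl(S_0(\Da)\bigr)=S_0(\Da).
\]

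Next, because $\mu$ preserves $S_0(\Da)$ as a set of vertices, both $\mu(\Da_0)$ and $\mu(\Da_{[b]})$ lie in $S_0(\Da)$. By the explicit description of the block recalled in the paper (the lemma parameterizing $S_0(\Da)$ by $[c]=[c_1,\dots,c_{n-1}]$ with $0\leq c_i\leq b_i$), the maximal orders in $S_0(\Da)$ all sit in the standard apartment and are of the form $\Da_{[c]}$. Thus $\mu(\Da_0)=\Da_{[c']}$ and $\mu(\Da_{[b]})=\Da_{[d']}$ for suitable $[c'],[d']\in\Gamma$.

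Finally, I would apply the previous lemma to the pair $\bigl(\Da_{[c']},\Da_{[d']}\bigr)$: the displayed equality $S_0(\Da)=S_0\bigl(\Da_{[c']}\cap\Da_{[d']}\bigr)$ forces $\{[c'],[d']\}=\{0,[b]\}$, which is exactly the desired conclusion $\{\mu(\Da_0),\mu(\Da_{[b]})\}=\{\Da_0,\Da_{[b]}\}$. No step here presents a real obstacle; the only thing that needs a line of care is the compatibility $S_0(\mu(\Ha))=\mu(S_0(\Ha))$, because nothing else is actually doing work beyond the preceding lemma.
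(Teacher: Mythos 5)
Your proof is correct and follows the same route as the paper's: reduce to the preceding lemma by observing that $\mu(\Da)=\mu(\Da_0)\cap\mu(\Da_{[b]})$ and that $S_0\bigl(\mu(\Da)\bigr)=\mu\bigl(S_0(\Da)\bigr)=S_0(\Da)$. The paper states this in one line; you merely spell out the formal compatibility $S_0(\mu(\Ha))=\mu\bigl(S_0(\Ha)\bigr)$ and the fact that $\mu(\Da_0),\mu(\Da_{[b]})$ remain in the block (hence in the apartment), both of which the paper leaves implicit.
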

 
 \begin{proof}
Note that $\mu(\Da)$ is contained in exactly the same maximal orders as $\Da$, 
and furthermore $\mu(\Da)=\mu(\Da_{0})\cap\mu(\Da_{[b]})$, whence the result
follows from the previous lemma.
\end{proof}

\begin{lemma}\label{ant}
There exists an automorphism of $\alge$, satisfying $\mu(\Da_{0})=\Da_{[b]}$ and 
$\mu(\Da_{[b]})=\Da_{0}$, if and only if $[b]^*=-[b]$.
\end{lemma}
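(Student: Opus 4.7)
The plan is to invoke Skolem--Noether: every automorphism of the CSA $\alge=\matrici_n(B)$ is inner, so $\mu=\mathrm{Ad}(g)$ for some $g\in\alge^*$, and a short check gives $\mu(\Da_\Lambda)=\Da_{g\Lambda}$. In particular $\mu$ acts as an automorphism of the BT-building preserving the invariant factors of any ordered pair of maximal orders, hence the type distance in $\Gamma$.

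For $(\Rightarrow)$, I would fix a representative $\stackrel{\rightarrow}a=(a_1,\dots,a_n)$ of $[b]$ with $a_{i+1}-a_i=b_i$, so that the lattice $\Lambda_{[b]}=\bigoplus_i\pi^{a_i}e_i\oink_B$ has stabilizer $\Da_{[b]}$. Reversing the basis via $f_i=\pi^{a_{n+1-i}}e_{n+1-i}$ lets me rewrite $\Lambda_0=\bigoplus_i\pi^{-a_{n+1-i}}f_i\oink_B$, so the invariant factors of the pair $(\Lambda_{[b]},\Lambda_0)$ come out as $(-a_n,\dots,-a_1)$. Their image in $\Gamma$, after adding a multiple of $\stackrel{\rightarrow}u$ and reading off successive differences, is $[b_{n-1},\dots,b_1]$. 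Preservation of type distance under $\mu$ then forces $[b_1,\dots,b_{n-1}]=[b_{n-1},\dots,b_1]$ in $\Gamma$, i.e.\ $b_j=b_{n-j}$ for all $j$, which by (\ref{bst}) is precisely $[b]^*=-[b]$.

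For $(\Leftarrow)$, assume $b_j=b_{n-j}$. Then $a_j+a_{n+1-j}$ is independent of $j$; let $m$ denote its common value. I would take the generalized anti-diagonal element $g=\sum_{i=1}^n\pi^{a_i}E_{i,n+1-i}\in\alge^*$, which is invertible because each row and each column has exactly one non-zero entry. A direct matrix-vector calculation gives $ge_k=\pi^{a_{n+1-k}}e_{n+1-k}$, hence $g\Lambda_0=\Lambda_{[b]}$ after reindexing. A parallel computation yields $g\Lambda_{[b]}=\pi^m\Lambda_0$, a lattice homothetic to $\Lambda_0$ and therefore with the same stabilizer $\Da_0$. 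So $\mu=\mathrm{Ad}(g)$ swaps $\Da_0$ and $\Da_{[b]}$.

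The only delicate point throughout is that the uniformizer $\pi$ is not central in $B$: every manipulation rests on the identity $\pi^s\oink_B=\oink_B\pi^s$ to move scalar powers of $\pi$ across the $\oink_B$-generators of the lattices. Once that is used carefully, the argument reduces to routine bookkeeping with invariant factors and matrix multiplication.
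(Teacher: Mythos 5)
Your proof is correct, and the sufficiency direction is essentially the same as the paper's: your anti-diagonal $g=\sum_i\pi^{a_i}E_{i,n+1-i}$ is precisely the matrix realizing the paper's $\mu=\tau\circ\rho$, the reversal permutation composed with conjugation by $\mathrm{diag}(\pi^{a_1},\dots,\pi^{a_n})$. The necessity direction, however, takes a genuinely different route. The paper introduces \emph{line-type} pairs of maximal orders and argues combinatorially inside the BT-building: an edge of the block parallel to axis $i$ is distinguished by the total distance between its consecutive vertices, so an automorphism of $\alge$ preserving the block cannot exchange one axis direction for another, and this rigidity forces the edge lengths $b_i$ to be palindromic. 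You instead reduce to inner automorphisms via Skolem--Noether, observe that $\mathrm{Ad}(g)$ preserves the ordered type distance in $\Gamma$ (since $g$ acts as a right $\oink_B$-module automorphism of $B^n$), and compute that the type distance from $\Da_{[b]}$ back to $\Da_0$ is the reversed tuple $[b_{n-1},\dots,b_1]$; equating this with the type distance $[b_1,\dots,b_{n-1}]$ of the preimage pair gives $b_i=b_{n-i}$ immediately. This is cleaner and more algebraic than the paper's argument and avoids the line-type machinery entirely, at the cost of the extra geometric observation (the directional labeling of edges by total distance) that the paper's version makes explicit.
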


 \begin{proof}
We denote by $\delta$ the canonical graph-distance on the 1-skeleton of the BT-building.
We say that a pair $(\Da,\Da')$ of maximal orders  is of line-type if there is exactly
$\delta(\Da,\Da')+1$ maximal orders containing $\Da\cap\Da'$. A pair $([c],[d])\in\Gamma^2$
is of line-type when $(\Da_{[c]},\Da_{[d]})$ is of line-type.
Automorphisms of $\alge$ necessarily take pairs of line-type to pairs of line-type.
Note that, if $[b]=[b_1,\dots,b_n]$ is totally positive, then 
$$1+\delta\left(\Da_0,\Da_{[b]}\right)\leq 1+\sum_{i=1}^{n-1}b_i\leq\prod_{i=1}^{n-1}(1+b_i),$$
an equality between the second and third expressions imply that at most one $b_i$ in non-zero.  
Note that the expression on the right of the preceding chain of inequalities is actually the number of vertices in $S_0(\Da)$.
Hence, if $([c],[d])$ is of line type and $[d]-[c]$ is totally positive, then
$S_0\left(\Da_{[c]}\cap_{[d]}\right)$ is a line parallel to one of the axes. We conclude that any any automorphism preserving  $\Da_{0}$, and mapping
$\Da_{[b]}$  to another totally positive order must preserve the set of axes of the polytope $S_0(\Da_0\cap\Da_{[b]})$.
No automorphism can take a line parallel to one axis to a line parallel to a different axis,
since the total distance  between consecutive elements in the line is diferent.
Necessity follows. 

If $[b]^*=-[b]$, the permutation $\rho\in S_n$ defined by $\rho(i)=n-i$ takes $[b]$ to
$-[b]$, so we can define $\mu=\tau\circ\rho$, where $\tau$ is conjugation by the diagonal matrix
$\mathrm{diag}(\pi^{a_1},\dots,\pi^{a_n})$ with 
$[b]=\stackrel{\rightarrow}a+\langle\stackrel{\rightarrow}u\rangle$. Sufficiency follows.
\end{proof}

Theorem \ref{t11} follows from the following lemma, which is an immediate consequence of
Lemma \ref{ant}.

\begin{lemma}
For any GEO $\Da=\Da_0\cap\Da_{[b]}$ in a $k$-CSA $\alge=\matrici_f(B)$, where $B$ is a division algebra, we have
$H_k(\Da)=H_k(\Da_0)=\oink_k^*k^{*f}$ unless the following conditios hold:
\begin{enumerate}
\item $[b]=-[b]^*$.
\item $\sum_{i=1}^na_i\equiv\frac{n}2\ (\mathrm{mod}\ n)$ for any (and therefore every) 
$\stackrel{\rightarrow}a\in\enteri^n$ satisfying
$\stackrel{\rightarrow}a+\langle\stackrel{\rightarrow}u\rangle=[b]$.
\end{enumerate}
In the latter case $H_k(\Da)=\oink_k^*k^{*(f/2)}$
\end{lemma}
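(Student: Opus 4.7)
The plan is to determine $H_k(\Da)$ by analyzing the action of $\mathrm{Int}(g)$ on the pair $\{\Da_0,\Da_{[b]}\}$ for $g\in\alge^*$ normalizing $\Da$. Any such $g$ permutes the set $S_0(\Da)$ of maximal orders containing $\Da$, and by Lemma~\ref{pant} preserves $\{\Da_0,\Da_{[b]}\}$ as a set. This splits the normalizer of $\Da$ into a \emph{fixing class} (where each $\Da_i$ is preserved individually) and a \emph{swapping class} (where the two are interchanged). The fixing-class norms are contained in $H_k(\Da_0)\cap H_k(\Da_{[b]})=\oink_k^*k^{*f}$, the value for maximal orders quoted in the introduction; the reverse inclusion follows by exhibiting explicit simultaneous normalizers, $\mathrm{diag}(u,1,\dots,1)$ with $u\in\oink_B^*$ of prescribed reduced norm to produce all of $\oink_k^*$, and scalar matrices $\lambda' I$ with $\lambda'\in B^*$ to produce all of $k^{*f}$, using in both cases that $\oink_B$ is the unique maximal order of $B$ to see that both $\Da_0$ and $\Da_{[b]}$ are preserved.

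For the swapping class to be nonempty, Lemma~\ref{ant} forces condition~(1), $[b]^*=-[b]$. Under that assumption, the construction in the proof of Lemma~\ref{ant} supplies an explicit realizing element $g_0=DP$, where $D=\mathrm{diag}(\pi^{a_1},\dots,\pi^{a_f})$ and $P$ is the permutation matrix for $i\mapsto f-i+1$. Using that the reduced norm of a diagonal matrix in $\matrici_f(B)$ is the product of the reduced norms of its entries, that $N_B(\pi)$ is a uniformizer of $\oink_k$, and that $N(P)=\pm1\in\oink_k^*$, a short calculation yields $v_k(N(g_0))=a_1+\cdots+a_f$. Any two swap-realizing elements differ by a fixing element, since $g_1g_2^{-1}$ lies in the fixing class whenever $g_1, g_2$ both swap, so the set of swapping-class norms is exactly the coset $N(g_0)\cdot\oink_k^*k^{*f}$.

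Combining both classes, $H_k(\Da)=\oink_k^*k^{*\gcd(f,\,a_1+\cdots+a_f)}$. The remark following Theorem~\ref{t11}, that $2(r_1+\cdots+r_f)\equiv 0\pmod f$ in the symmetric case, forces the exponent modulo $f$ to lie in $\{0,f/2\}$; it equals $f/2$ precisely under condition~(2), giving $H_k(\Da)=\oink_k^*k^{*(f/2)}$, and equals $0$ otherwise, giving $H_k(\Da)=H_k(\Da_0)$. The main obstacle is organizing this case analysis cleanly around Lemmas~\ref{pant} and~\ref{ant}, in particular verifying that $v_k(N(g_0))$ is well-defined modulo $f$ independently of the chosen representative $\stackrel{\rightarrow}a$ of $[b]$ (shifting $\stackrel{\rightarrow}a$ by $m\stackrel{\rightarrow}u$ changes the sum by $mf$), and checking that the fixing class really surjects onto $\oink_k^*k^{*f}$ so that the coset description of $H_k(\Da)$ is genuinely a subgroup; the rest is formal once the earlier lemmas are in hand.
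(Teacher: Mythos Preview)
Your argument is correct and follows essentially the same route as the paper: use Lemma~\ref{pant} to reduce to the fixing/swapping dichotomy on $\{\Da_0,\Da_{[b]}\}$, invoke Lemma~\ref{ant} to see that swapping is possible exactly when $[b]^*=-[b]$, and then compute that a swapping element has reduced norm in the coset $\pi^{d}\oink_k^*k^{*f}$ with $d=\sum_i a_i$ the total distance, which by the remark after Theorem~\ref{t11} lies in $\{0,f/2\}$ modulo $f$. The paper's proof is terser---it simply asserts that the fixing class contributes $\oink_k^*k^{*f}$ and that the swapping norm is $\pi^d$ up to that group---while you spell out the explicit normalizers $\mathrm{diag}(u,1,\dots,1)$, scalars, and $g_0=DP$, and verify well-definedness of $d$ modulo $f$; but the skeleton is identical.
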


\begin{proof}
It follows from Lemma \ref{pant} and Lemma \ref{ant} that condition 1 is necessary. When this is the case, we conclude, again from Lemma \ref{pant}, that $H(\Da)$ is generated by $\oink_k^*k^{*f}$ and the norm of an arbitrary
element $u$ satisfying $u\Da_0 u^{-1}=\Da_{[b]}$ and $u\Da_{[b]} u^{-1}=\Da_0$. The norm of such an element must
satisfy $N(u)\oink_k^*k^{*f}=\pi^d\oink_k^*k^{*f}$, where $d$ is the total distance between $\Da_0$ and $\Da_{[b]}$.
As noted in \S1, either $d\equiv0\ (\mathrm{mod}\ f)$ or $d\equiv\frac f2\ (\mathrm{mod}\ f)$. The result follows.
\end{proof}

\section{Representations}

Let $\Ha$ be a suborder of an order $\Da$ on $\alge$, let $\Sigma$ be the spinor class field for the 
genus of $\Da$, and consider the set $$\Phi=\{\rho(\Da,\Da')|\Da'\in\mathrm{gen}(\Da),\,\Ha\subseteq\Da'\}\subseteq\mathrm{Gal}(\Sigma/K).$$
When $\Phi$ is a group, the fixed subfield $F(\Da|\Ha)=\Sigma^\Phi$ is called the representation field. More generally,
the field  $F_-(\Da|\Ha)=\Sigma^{\langle\Phi\rangle}$, which is usually easy to compute, is called the 
lower representation field,
while the fixed field $F^-(\Da|\Ha)=\Sigma^{\Gamma}$, where $\Gamma=\{\gamma\in \mathrm{Gal}(\Sigma/K)|
\gamma \Phi=\Phi\}$, the upper representation field, has the important bound $F^-(\Da|\Ha)\subseteq L$ 
when $\Ha$ is an order contained in the maximal subfield $L$. Note that the representation field is defined if
and only if $\Gamma=\langle\Phi\rangle$, or equivalently, if $F_-(\Da|\Ha)=F^-(\Da|\Ha)$.

Let $$I(\Da|\Ha)=\{N(a)|a\in\alge_\ad,\ \Ha\subseteq a\Da a^{-1}\},$$ be the relative spinor image, then
the lower representation field is the class field corresponding to the set
$K^*\big\langle I(\Da|\Ha)\big\rangle$, while the upper representation field is the class 
field corresponding to the set $K^*H_-(\Da|\Ha)$, where 
$$H_-(\Da|\Ha)=\big\{a\in J_K|a I(\Da|\Ha)=I(\Da|\Ha)\big\}.$$ We conclude that
the functions $\Da\mapsto F_-(\Da|\Ha)$ and $\Da\mapsto F^-(\Da|\Ha)$ reverse inclusions.
In particular, if $\Ha$ is an order in a maximal subfield $L$, and  if we have $F_-(\Da|\Ha)=L$ for some order $\Da$ containing $\Ha$, the same holds
for every $\Da'$ with $\Ha\subseteq\Da'\subseteq\Da$. For the ring of integers $\oink_L$
of the maximal subfield $L$, we can give a more precise result. We say that an order $\Da$ of maximal
rank is strongly un-ramified if $\Sigma(\Da)\subseteq\Sigma(\Da_0)$ for some (or equivalently, any) maximal
order $\Da_0$. Recall that a $K$-CSA has no partial ramification if it is locally a matrix or a division algebra at all finite places.

\begin{proposition}\label{su}
Assume $\Da$ is a strongly unramified order, and $\alge$ has no partial ramification. Then for every maximal subfield
$L\subseteq\alge$, such that $\oink_L\subseteq \Da$, the representation field $F(\Da|\oink_L)$ is defined and in fact
$F(\Da|\oink_L)=\Sigma(\Da)\cap L$.
\end{proposition}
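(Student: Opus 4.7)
The plan is to pass to the idele-class side via class field theory and derive the equality from two targeted inclusions. Two preliminary observations guide the setup. First, $\Sigma(\Da)\cap L=\Sigma(\Da)\cap L^{\mathrm{ab}}$ is the class field of $K^*H(\Da)N_{L/K}(J_L)$, since $K^*N_{L/K}(J_L)$ is the norm subgroup attached to $L^{\mathrm{ab}}$. Second, the containment $F_-(\Da|\oink_L)\subseteq F^-(\Da|\oink_L)$ always holds, because $\Gamma\subseteq\Phi\subseteq\langle\Phi\rangle$: any $\gamma\in\Gamma$ satisfies $\gamma=\gamma\cdot\mathrm{Id}\in\gamma\Phi=\Phi$, as $\mathrm{Id}=\rho(\Da,\Da)\in\Phi$. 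The two targeted inclusions are then $F^-(\Da|\oink_L)\subseteq\Sigma(\Da)\cap L$ and $\Sigma(\Da)\cap L\subseteq F_-(\Da|\oink_L)$; chaining them with the general inequality gives $F_-=F^-=\Sigma(\Da)\cap L$, so $F(\Da|\oink_L)$ is defined with the claimed value.

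The first inclusion $F^-(\Da|\oink_L)\subseteq\Sigma(\Da)\cap L$ is immediate: the stated general bound gives $F^-(\Da|\oink_L)\subseteq L$, and $F^-(\Da|\oink_L)\subseteq\Sigma(\Da)$ is tautological. The substantive inclusion $\Sigma(\Da)\cap L\subseteq F_-(\Da|\oink_L)$ translates, via CFT, into the inclusion of idele subgroups $\langle I(\Da|\oink_L)\rangle\subseteq K^*H(\Da)N_{L/K}(J_L)$. Concretely, for each $a\in\alge_\ad^*$ with $\oink_L\subseteq a\Da a^{-1}$ one must show $N(a)\in K^*H(\Da)N_{L/K}(J_L)$.

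The plan for that is purely local. At each place $\wp$, given $a_\wp$ with $\oink_L\otimes\oink_\wp\subseteq a_\wp\Da_\wp a_\wp^{-1}$, the two embeddings $x\mapsto x$ and $x\mapsto a_\wp^{-1}xa_\wp$ realise $\oink_L\otimes\oink_\wp$ inside $\Da_\wp$. By Skolem--Noether for $\alge_\wp$ they differ by conjugation by some $v_\wp\in\alge_\wp^*$, and the idea is to use the strongly unramified hypothesis on $\Da$ together with the absence of partial ramification on $\alge$ to right-multiply $v_\wp$ by an element of $L_\wp^*$ (which centralises $\oink_L$) to land in the normaliser of $\Da_\wp$. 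This produces a factorisation $a_\wp\equiv u_\wp l_\wp\pmod{K_\wp^*}$ with $l_\wp\in L_\wp^*$ and $u_\wp$ normalising $\Da_\wp$, and globally $N(a)\in K^*H(\Da)N_{L/K}(J_L)$.

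The main obstacle is precisely this local adjustment of the Skolem--Noether conjugator. Under no partial ramification, $\alge_\wp$ is either a matrix algebra or a division algebra over $K_\wp$; in the division-algebra case the normaliser of $\Da_\wp$ is essentially $\alge_\wp^*$ itself and the argument is trivial. In the split case, one uses the Bruhat--Tits picture of Section~3---both embedded copies of $\oink_L\otimes\oink_\wp$ determine the same apartment---together with the strongly unramified hypothesis, which keeps $H_\wp(\Da)$ as large as the local spinor image of a maximal order, to ensure the required correction inside $L_\wp^*$ exists. The no-partial-ramification condition is what prevents the appearance of local completions $\matrici_f(B)$ with $B$ a proper division algebra, where finer type-distance invariants from Section~3 could obstruct the argument.
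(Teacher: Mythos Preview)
Your overall shape---sandwich $F_-\subseteq F^-$ between $\Sigma(\Da)\cap L$ on both ends---matches the paper's. The divergence is in how you obtain the lower bound $\Sigma(\Da)\cap L\subseteq F_-(\Da|\oink_L)$.

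The paper does \emph{not} attempt a direct local factorization. Instead it routes through a maximal order $\Da_0\supseteq\Da$: by the cited result \cite[Prop.~4.3.4]{spinor} one already knows $F(\Da_0|\oink_L)=\Sigma(\Da_0)\cap L$ (this is exactly where ``no partial ramification'' is consumed), and since $\Da\mapsto F_-(\Da|\Ha)$ reverses inclusions one gets $F_-(\Da|\oink_L)\supseteq F_-(\Da_0|\oink_L)=\Sigma(\Da_0)\cap L$. The strongly unramified hypothesis $\Sigma(\Da)\subseteq\Sigma(\Da_0)$ then gives $\Sigma(\Da)\cap L\subseteq\Sigma(\Da_0)\cap L$, closing the chain
\[
\Sigma(\Da)\cap L\subseteq\Sigma(\Da_0)\cap L\subseteq F_-(\Da|\oink_L)\subseteq F^-(\Da|\oink_L)\subseteq\Sigma(\Da)\cap L.
\]
That is the entire argument; no local building-theoretic computation is needed.

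Your proposed route has a genuine gap at what you yourself flag as the ``main obstacle''. The condition \emph{strongly unramified} is the global statement $\Sigma(\Da)\subseteq\Sigma(\Da_0)$, i.e.\ $K^*H(\Da)\supseteq K^*H(\Da_0)$; it does \emph{not} assert that $H_\wp(\Da)\supseteq H_\wp(\Da_0)$ at each place, which is what your local correction step implicitly uses (``keeps $H_\wp(\Da)$ as large as the local spinor image of a maximal order''). For general orders of maximal rank the local normalizer of $\Da_\wp$ need not contain that of $\Da_{0,\wp}$, so there is no reason the Skolem--Noether conjugator can be nudged by an element of $L_\wp^*$ into the normalizer of $\Da_\wp$. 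Your plan would, in effect, reprove the maximal-order case place by place and then demand something strictly stronger for $\Da$; the paper sidesteps this entirely by invoking the maximal-order case once and then using only monotonicity plus the global inclusion of spinor class fields.
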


\begin{proof}
It follows from \cite[Prop. 4.3.4]{spinor} than $F(\Da_0|\oink_L)=\Sigma(\Da_0)\cap L$, 
if $\oink_L\subseteq\Da_0$ and $\Da_0$ is maximal.
The preceding paragraph implies  that $F_-(\Da|\oink_L)\supseteq\Sigma(\Da_0)\cap L$ for any order $\Da$ of maximal rank containing $\oink_L$. On the other hand, we always have $F^-(\Da|\oink_L)\subseteq  L$, since the group $H_-(\Da|\Ha)$
contains the group of norms $N_{L/K}(J_L)$, of $J_L$ identified as a subgroup of $\alge_ \ad^*$.
Since $\Sigma(\Da)\subseteq\Sigma(\Da_0)$, we have
$$\Sigma(\Da)\cap L\subseteq\Sigma(\Da_0)\cap L\subseteq F_-(\Da|\oink_L)\subseteq F^-(\Da|\oink_L)\subseteq \Sigma(\Da)\cap L,$$
whence equality follows.
\end{proof}

The preceding proposition applies in particular to GEOs. The hypothesis on $\alge$ is necessary, as shown by
the counter-example in \cite[\S4.3]{spinor}, as maximal orders are GEOs. This result does not helps us to know
whether $\oink_L$ embeds in some order of the genus of $\Da$ or not. This is a local problem and can be answered in some cases by Proposition \ref{p42} bellow. 

\begin{proposition}\label{p42}
Let $\Ha$ be a local order such that $\mathfrak{S}_0(\Ha)$ is contained in the standard apartment.
Assume $\Da$ is a local GEO of type $[b]=[b_1,\dots,b_{n-1}]$. Then a local order $\Ha$ is contained in a conjugate of
$\Da$ if and only if any of the following equivalent relations holds:
\begin{enumerate}
\item $\mathfrak{S}_0(\Ha)$ has two vertices whose type difference is $b$.
 \item There exist two vertices $\Da_{[c]}$ and $\Da_{[d]}$ in $S_0(\Ha)$, such that
$[d]-[c]$ is in the $S_n$-orbit of $[b]$.
\end{enumerate}
\end{proposition}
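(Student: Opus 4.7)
The plan is to reduce the embedding question to a block-theoretic statement via the correspondence between maximal orders containing $\Ha$ and vertices of $\mathfrak{S}_0(\Ha)$. The guiding observation is that a GEO $\Da'=\Da_{\Lambda'}\cap\Da_{M'}$ contains $\Ha$ if and only if both $\Da_{\Lambda'}$ and $\Da_{M'}$ are vertices of $\mathfrak{S}_0(\Ha)$, which is immediate from the definition of the block.

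For the forward direction, I would write $\Da=\Da_0\cap\Da_{[b]}$ and assume $\Ha\subseteq u\Da u^{-1}$ for some $u\in\alge^*$. Since conjugation by $u$ corresponds to acting by $u$ on lattices of $B^f$, we have $u\Da u^{-1}=\Da_{u\Lambda_0}\cap\Da_{u\Lambda_{[b]}}$, where $\Lambda_0$ and $\Lambda_{[b]}$ are the lattices used to define $\Da_0$ and $\Da_{[b]}$. Both defining maximal orders of this conjugate contain $\Ha$, so they are vertices of $\mathfrak{S}_0(\Ha)$, hence belong to the standard apartment by hypothesis. Writing them as $\Da_{[c]}$ and $\Da_{[d]}$, the key point is that the $\alge^*$-action preserves invariant factors, so the invariant exponents of the pair $(\Lambda_{[c]},\Lambda_{[d]})$ coincide with those of $(\Lambda_0,\Lambda_{[b]})$. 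In the language of $\Gamma=\enteri^n/\langle\stackrel{\rightarrow}u\rangle$ this says exactly that $[d]-[c]$ lies in the $S_n$-orbit of $[b]$, giving~(2).

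For the backward direction, assume~(2) and let $\Da_{[c]},\Da_{[d]}\in S_0(\Ha)$ with $[d]-[c]$ in the orbit of $[b]$. Then $\Ha\subseteq \Da_{[c]}\cap\Da_{[d]}$, and the pair of lattices $(\Lambda_{[c]},\Lambda_{[d]})$ has the same system of invariant factors as $(\Lambda_0,\Lambda_{[b]})$, up to reordering of basis. By the usual transitivity of $\mathrm{GL}_f(B)$ on pairs of $\oink_B$-lattices with prescribed invariant factors, there exists $u\in\alge^*$ sending one pair to the other, whence $\Da_{[c]}\cap\Da_{[d]}=u\Da u^{-1}$, and $\Ha$ embeds into this conjugate of $\Da$. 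The equivalence of (1) and (2) is a matter of terminology: by the discussion of invariant factors in Section~1, the type-distance between two maximal orders in the standard apartment is by definition the $S_n$-orbit of the difference of their $\Gamma$-coordinates.

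The only delicate point is the bookkeeping with the $S_n$-action: one must use that the total ordering of invariant exponents makes the type-distance a well-defined orbit invariant under $\alge^*$-conjugation, and that conversely any pair of lattices in the standard apartment with invariant factors matching those of $(\Lambda_0,\Lambda_{[b]})$ can be moved to $(\Lambda_0,\Lambda_{[b]})$ by an element of $\alge^*$. Both facts are standard in the theory of lattices over local division algebras, so no genuine new difficulty arises beyond careful translation.
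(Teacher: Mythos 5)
Your proof is correct and takes essentially the same approach as the paper, reducing the question to the block/invariant-factor picture; the only difference is one of emphasis, since the paper treats the equivalence of the main clause with (2) as immediate and spells out $(1)\Leftrightarrow(2)$, while you treat $(1)\Leftrightarrow(2)$ as terminology and flesh out the main equivalence via conjugation-invariance of invariant factors and transitivity of $\mathrm{GL}_f(B)$ on lattice pairs.
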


\begin{proof}
It suffices to prove the equivalence between both statements. It is immediate that $(2)$ implies $(1)$, so
we prove the converse. Assume that $S_0(\Ha)$ has two vertices whose type difference is $b$. By applying the action of $S_n$ on the BT-tree leaving the standard apartment invariant (\S3), we can assume that $[d]-[c]$ is in the first 
quadrant. Since $[b]$ is the type 
distance between $\Da_{[c]}$ and $\Da_{[d]}$, and therefore also the type distance between their conjugates
 $\Da_{0}$ and $\Da_{[d]-[c]}$, we conclude that $[d]-[c]=[b]$. The result follows.
\end{proof}

\begin{example}
Let $\Da$ and $\Ha$ be local GEOs of type $[3,1]$ and $[4,1]$ respectively. Then $\mathfrak{S}_0(\Da)$ can be embedded into
 $\mathfrak{S}_0(\Ha)$ in three different ways, corresponding to three embeddings of $\Ha$ into $\Da$, as shown in Figure 1A. Note that $A$ denote the image of one fixed maximal order in $\mathfrak{S}_0(\Da)$. In this case the relative spinor image is $I_k(\Da|\Ha)=k^*$, since the images of the vertex $A$, under each of the three embeddings, are at different total distances from the origin.
\begin{figure}
\[ (A)
\unitlength .6mm 
\linethickness{0.4pt}
\ifx\plotpoint\undefined\newsavebox{\plotpoint}\fi 
\begin{picture}(102.75,9.5)(0,0)
\put(14,1){\line(0,1){8}}
\put(14,9){\line(1,0){6}}
\put(14,1){\line(1,0){6}}
\put(20,1){\line(0,1){8}}
\put(20,9){\line(1,0){6}}
\put(20,1){\line(1,0){6}}
\put(26,1){\line(0,1){8}}
\put(26,9){\line(1,0){6}}
\put(26,1){\line(1,0){6}}
\put(32,1){\line(0,1){8}}
\multiput(14,9)(.0375,-.05){156}{\line(0,-1){.05}}
\multiput(20,9)(.0375,-.05){156}{\line(0,-1){.05}}
\multiput(26,9)(.0375,-.05){156}{\line(0,-1){.05}}
\multiput(32,9)(1,0){6}{{\rule{.4pt}{.4pt}}}
\multiput(38,9)(0,-1){8}{{\rule{.4pt}{.4pt}}}
\multiput(32,1)(1,0){6}{{\rule{.4pt}{.4pt}}}
\multiput(32,9)(.5,-.66){12}{{\rule{.4pt}{.4pt}}}
\put(14,1){\makebox(0,0)[cc]{$\bullet$}}
\put(11,0){\makebox(0,0)[cc]{${}_A$}}
\put(54,1){\line(0,1){8}}
\put(54,9){\line(1,0){6}}
\put(54,1){\line(1,0){6}}
\put(60,1){\line(0,1){8}}
\put(60,9){\line(1,0){6}}
\put(60,1){\line(1,0){6}}
\put(66,1){\line(0,1){8}}
\put(66,9){\line(1,0){6}}
\put(66,1){\line(1,0){6}}
\put(72,1){\line(0,1){8}}
\multiput(54,9)(.0375,-.05){156}{\line(0,-1){.05}}
\multiput(60,9)(.0375,-.05){156}{\line(0,-1){.05}}
\multiput(66,9)(.0375,-.05){156}{\line(0,-1){.05}}
\multiput(48,9)(1,0){6}{{\rule{.4pt}{.4pt}}}
\multiput(48,9)(0,-1){8}{{\rule{.4pt}{.4pt}}}
\multiput(48,1)(1,0){6}{{\rule{.4pt}{.4pt}}}
\multiput(48,9)(.5,-.66){12}{{\rule{.4pt}{.4pt}}}
\put(54,1){\makebox(0,0)[cc]{$\bullet$}}
\put(51,0){\makebox(0,0)[cc]{${}_A$}}
\put(84,9){\line(1,0){6}}
\put(90,1){\line(0,1){8}}
\put(90,9){\line(1,0){6}}
\put(90,1){\line(1,0){6}}
\put(96,1){\line(0,1){8}}
\put(96,9){\line(1,0){6}}
\put(96,1){\line(1,0){6}}
\put(102,1){\line(0,1){8}}
\put(102,1){\line(1,0){6}}
\multiput(84,9)(.0375,-.05){156}{\line(0,-1){.05}}
\multiput(90,9)(.0375,-.05){156}{\line(0,-1){.05}}
\multiput(96,9)(.0375,-.05){156}{\line(0,-1){.05}}
\multiput(102,9)(.0375,-.05){156}{\line(0,-1){.05}}
\multiput(84,1)(1,0){6}{{\rule{.4pt}{.4pt}}}
\multiput(102,9)(1,0){6}{{\rule{.4pt}{.4pt}}}
\multiput(84,9)(0,-1){8}{{\rule{.4pt}{.4pt}}}
\multiput(108,9)(0,-1){8}{{\rule{.4pt}{.4pt}}}
\put(84,9){\makebox(0,0)[cc]{$\bullet$}}
\put(81,8){\makebox(0,0)[cc]{${}_A$}}
\end{picture}
\]
\[(B)
\unitlength .6mm 
\linethickness{0.4pt}
\ifx\plotpoint\undefined\newsavebox{\plotpoint}\fi 
\begin{picture}(80.75,17.5)(0,0)
\put(14,1){\line(0,1){8}}
\put(14,9){\line(1,0){6}}
\put(14,1){\line(1,0){6}}
\put(20,1){\line(0,1){8}}
\put(20,9){\line(1,0){6}}
\put(20,1){\line(1,0){6}}
\put(26,1){\line(0,1){8}}
\put(26,9){\line(1,0){6}}
\put(26,1){\line(1,0){6}}
\put(32,1){\line(0,1){8}}
\put(14,9){\line(0,1){8}}
\put(14,17){\line(1,0){6}}
\put(20,9){\line(0,1){8}}
\put(20,17){\line(1,0){6}}
\put(26,9){\line(0,1){8}}
\put(26,17){\line(1,0){6}}
\put(26,17){\line(1,0){6}}
\put(32,9){\line(0,1){8}}
\multiput(14,17)(.0375,-.05){156}{\line(0,-1){.05}}
\multiput(20,17)(.0375,-.05){156}{\line(0,-1){.05}}
\multiput(26,17)(.0375,-.05){156}{\line(0,-1){.05}}
\multiput(14,9)(.0375,-.05){156}{\line(0,-1){.05}}
\multiput(20,9)(.0375,-.05){156}{\line(0,-1){.05}}
\multiput(26,9)(.0375,-.05){156}{\line(0,-1){.05}}
\multiput(32,17)(1,0){6}{{\rule{.4pt}{.4pt}}}
\multiput(38,17)(0,-1){8}{{\rule{.4pt}{.4pt}}}
\multiput(32,9)(1,0){6}{{\rule{.4pt}{.4pt}}}
\multiput(32,17)(.5,-.66){12}{{\rule{.4pt}{.4pt}}}
\multiput(32,1)(1,0){6}{{\rule{.4pt}{.4pt}}}
\multiput(38,9)(0,-1){8}{{\rule{.4pt}{.4pt}}}
\multiput(32,9)(.5,-.66){12}{{\rule{.4pt}{.4pt}}}
\put(14,1){\makebox(0,0)[cc]{$\bullet$}}
\put(11,0){\makebox(0,0)[cc]{${}_A$}}
\put(54,9){\line(0,1){8}}
\put(54,17){\line(1,0){6}}
\put(54,9){\line(1,0){6}}
\put(60,9){\line(0,1){8}}
\put(60,17){\line(1,0){6}}
\put(60,9){\line(1,0){6}}
\put(66,9){\line(0,1){8}}
\put(66,17){\line(1,0){6}}
\put(66,9){\line(1,0){6}}
\put(72,9){\line(0,1){8}}
\put(54,1){\line(0,1){8}}
\put(54,1){\line(1,0){6}}
\put(60,1){\line(0,1){8}}
\put(60,1){\line(1,0){6}}
\put(66,1){\line(0,1){8}}
\put(66,1){\line(1,0){6}}
\put(72,1){\line(0,1){8}}
\multiput(54,17)(.0375,-.05){156}{\line(0,-1){.05}}
\multiput(60,17)(.0375,-.05){156}{\line(0,-1){.05}}
\multiput(66,17)(.0375,-.05){156}{\line(0,-1){.05}}
\multiput(54,9)(.0375,-.05){156}{\line(0,-1){.05}}
\multiput(60,9)(.0375,-.05){156}{\line(0,-1){.05}}
\multiput(66,9)(.0375,-.05){156}{\line(0,-1){.05}}
\multiput(48,17)(1,0){6}{{\rule{.4pt}{.4pt}}}
\multiput(48,17)(0,-1){8}{{\rule{.4pt}{.4pt}}}
\multiput(48,9)(1,0){6}{{\rule{.4pt}{.4pt}}}
\multiput(48,17)(.5,-.66){12}{{\rule{.4pt}{.4pt}}}
\multiput(48,9)(0,-1){8}{{\rule{.4pt}{.4pt}}}
\multiput(48,1)(1,0){6}{{\rule{.4pt}{.4pt}}}
\multiput(48,9)(.5,-.66){12}{{\rule{.4pt}{.4pt}}}
\put(54,1){\makebox(0,0)[cc]{$\bullet$}}
\put(51,0){\makebox(0,0)[cc]{${}_A$}}
\end{picture}
\]
\caption{Two examples of embeddings of one GEO into another.}
\end{figure}
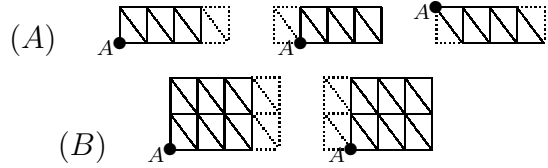
\end{example}

\begin{example}
Let $\Da$ and $\Ha$ be local GEOs of type $[3,2]$ and $[4,2]$ respectively. Then $\mathfrak{S}_0(\Da)$ can be embedded into $\mathfrak{S}_0(\Ha)$ in just two different ways (Figure 1B). In this case the relative spinor image $I_k(\Da|\Ha)=\oink_k^*k^{*3}\cup \pi^2\oink_k^*k^{*3}$ is not a group.
\end{example}

In order to apply this result to commutative orders, we need an explicit description of the cell complex $\mathfrak{S}_0(\Ha)$. We can do this for the order $\Ha=\oink_L$,
for a maximal separable subfield $L\subseteq\alge$, when $\alge$ has no partial ramification. Recall that, for a global separable field extension $L/K$, the local completion
$L_\wp=L\otimes_KK_\wp$ is a product of fields, and therefore we need a description of $\mathfrak{S}_0(\oink_L)$ for separable
commutative algebras $L$. This is provided by next result. Here we identify the set of vertices of an $n$--dimensional apartment with
$\enteri^n$, and all cartesian products must be understood in this context. 

\begin{proposition}
Assume that the $n$-dimensional local algebra $L=\prod_{i=1}^rL_i\subseteq\matrici_n(k)$ is a product of fields. Then 
$\mathfrak{S}_0(\oink_L)$ is contained in an apartment $A$ and its set of vertices has a decomposition of the form
$$S_0(\oink_L)=S_1\times\enteri\times S_2\times\cdots\times\enteri\times S_r,$$
where every $S_i\subseteq\enteri^{\dim_kL_i-1}$ is the set of vertices in a simplex of dimension $e(L_i/k)-1$.
\end{proposition}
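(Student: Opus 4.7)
The plan is to exhibit an explicit filtration-adapted $k$-basis of $V=k^n$ (viewed as a left $L$-module) in which every $\oink_L$-stable lattice is monomial, and then to read the product decomposition off the resulting apartment coordinates. First I would use the primitive idempotents of $L=\prod_{i=1}^r L_i$ to split $V=\bigoplus V_i$; since $\dim_k L=n=\dim_k V$, a dimension count forces $V_i\cong L_i$ as $L_i$-module, so any $\oink_L$-stable lattice $\Lambda\subseteq V$ decomposes as $\bigoplus\Lambda_i$ with $\Lambda_i=\pi_i^{a_i}\oink_{L_i}$ for some $a_i\in\enteri$ (where $\pi_i$ is a uniformizer of $L_i$). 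Two such lattices give the same maximal order iff they are $k^*$-homothetic, i.e.\ iff the tuples differ by an integer multiple of $(e_1,\dots,e_r)$, so as a set $S_0(\oink_L)\cong\enteri^r/\enteri\cdot(e_1,\dots,e_r)$.

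For the adapted basis, I pick lifts $b_{i,1},\dots,b_{i,f_i}\in\oink_{L_i}$ of an $\mathbb{F}_q$-basis of the residue field $\oink_{L_i}/\pi_i\oink_{L_i}$ and take $\{\pi_i^l b_{i,j}:0\le l<e_i,\,1\le j\le f_i\}$ as an $\oink_k$-basis of $\oink_{L_i}$; concatenating over $i$ gives a $k$-basis of $V$ defining an apartment $A$. The key computation is that, writing $a_i=q_ie_i+s_i$ with $0\le s_i<e_i$, the lattice $\pi_i^{a_i}\oink_{L_i}$ assigns to the basis vector $\pi_i^l b_{i,j}$ the coordinate $q_i+1$ when $l<s_i$ and $q_i$ when $l\ge s_i$ (independently of $j$). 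This shows every $\oink_L$-stable lattice is monomial in the basis, so $\mathfrak{S}_0(\oink_L)\subseteq A$, and identifies the block-$i$ contribution to each vertex with a step-function pattern of length $e_i$, repeated $f_i$ times.

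The vertex of $\mathfrak{S}_0(\oink_L)$ corresponding to $(a_1,\dots,a_r)$ is then determined in $\enteri^n/\enteri\vec u$ by the tuple $(s_1,\dots,s_r;q_1,\dots,q_r)$ modulo the simultaneous shift $q_i\mapsto q_i-m$. Using $q_1$ as a reference and the differences $q_i-q_1$ for $i\ge 2$ as free integer parameters yields the desired bijection $S_0(\oink_L)\leftrightarrow S_1\times\enteri\times S_2\times\cdots\times\enteri\times S_r$, where $S_i$ is the set of $e_i$ distinct step-patterns of block $i$, naturally embedded into $\enteri^{\dim_k L_i-1}$ after quotienting by the block's all-ones vector. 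The simplex structure on each $S_i$ then follows from the strictly descending chain $\oink_{L_i}\supsetneq\pi_i\oink_{L_i}\supsetneq\cdots\supsetneq\pi_i^{e_i-1}\oink_{L_i}\supsetneq\pi_i^{e_i}\oink_{L_i}=\pi\oink_{L_i}$: these $e_i$ lattice classes span an $(e_i-1)$-simplex in the BT-building of $\mathrm{PGL}(V_i)$ by the very definition of a simplex.

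The main obstacle I anticipate is the basis construction: one has $\pi_i^{e_i}=u_i\pi$ with $u_i$ a unit of $\oink_{L_i}$ that need not be $1$ in the mixed-ramification case, and a careless choice of $\oink_k$-basis produces non-monomial expressions for the lattices $\pi_i^m\oink_{L_i}$. The filtration-adapted basis above sidesteps this by using the descending filtration by powers of $\pi_i$ as its organizing principle, so that each $\pi_i^m\oink_{L_i}$ is automatically a direct sum of rank-one $\oink_k$-pieces along basis vectors. After that, the remaining steps reduce to bookkeeping on coordinates in $\enteri^n/\enteri\vec u$.
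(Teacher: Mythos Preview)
Your argument is correct and follows essentially the same route as the paper: identify $V$ with the regular $L$-module so that $\oink_L$-stable lattices are exactly products of fractional ideals $\prod_i \pi_i^{a_i}\oink_{L_i}$, choose a basis of the form $\{\pi_i^l b_{i,j}\}$ adapted to the $\pi_i$-adic filtrations, and read off the product decomposition from the $e_i$ homothety classes in each factor. The paper's proof is terser (it takes the $b_{i,j}$ in the maximal unramified subfield and leaves the monomiality of the ideal lattices implicit), whereas you spell out the coordinate bookkeeping and flag the $\pi_i^{e_i}=u_i\pi$ issue explicitly; but the underlying strategy is the same.
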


\begin{proof}
Note that the regular representation $\phi:L\rightarrow\matrici_n(k)$ is, up to conjugacy, the only 
faithfull $n$-dimensional representation of the $k$-algebra $L$. We conclude that the maximal orders containing
$\oink_L$ are in correspondence with the classes of fractional ideals in $\oink_L$ up to $k^*$-multiplication.
By choosing a suitable basis, we can assume that $L$ is the set of matrices of the form
$$\left(\begin{array}{cccc}\phi_1(\lambda_1)&0&\cdots&0\\0&\phi_2(\lambda_2)&\cdots&0\\
 \vdots&\vdots&\ddots&\vdots\\ 0&0&\cdots&\phi_r(\lambda_r)\end{array}\right),\qquad \lambda_i\in L_i,$$
where $\phi_i:L_i\rightarrow\matrici_{[L_i:k]}(k)$ is the regular representation with respect to a basis
of the form $S\cup\pi_iS\cup\cdots\cup\pi^{e(L_i/K)-1}S$ of $L_i$, for an arbitrary $k$-basis $S$ 
of the largest unramified subfield of $L_i$. Then all fractional ideals in $L$ have the form
$I_1\times\cdots\times I_r$, where $I_i$ is a fractional ideal in $L_i$. In particular, $I_i$ is is homothetic to one of the ideals
$(1),\left(\pi_i^1\right),\dots,\left(\pi_i^{e(L_i/K)-1}\right)$. The fact that the corresponding vertices form a simplex is immediate from the definition of the BT-building.
\end{proof}
\begin{figure}
\unitlength 1mm 
\linethickness{0.4pt}
\ifx\plotpoint\undefined\newsavebox{\plotpoint}\fi 
\[(A)
\begin{picture}(26,26)(0,0)
\put(12,0){\vector(0,3){25}} \put(0,12){\vector(3,0){25}}
\put(16.2,8){\line(0,-1)4}\put(16.2,12){\line(0,-1)4}
\put(16.2,16){\line(0,-1)4}\put(16.2,20){\line(0,-1)4}
\put(12.2,8){\line(1,0)4}\put(12.2,4){\line(1,0)4}
\put(12.2,16){\line(1,0)4}\put(12.2,20){\line(1,0)4}
\put(12.2,12){\line(1,-1)4}\put(12.2,8){\line(1,-1)4}
\put(12.2,16){\line(1,-1)4}\put(12.2,20){\line(1,-1)4}
\put(14.2,24){\makebox(0,0)[cc]{\vdots}}\put(14.2,2){\makebox(0,0)[cc]{\vdots}}
\end{picture}\qquad(B)
\begin{picture}(86,26)(0,0)
\put(12,0){\vector(0,3){25}} \put(0,12){\vector(3,0){25}}
\put(16.2,16){\line(0,-1)4}\put(16.2,20){\line(0,-1)4}
\put(12.2,16){\line(1,0)4}\put(12.2,20){\line(1,0)4}
\put(12.2,16){\line(1,-1)4}\put(12.2,20){\line(1,-1)4}
\put(42,0){\vector(0,3){25}} \put(30,12){\vector(3,0){25}}
\put(46.2,12){\line(0,-1)4}
\put(46.2,16){\line(0,-1)4}
\put(42.2,16){\line(1,0)4}
\put(42.2,12){\line(1,-1)4}
\put(42.2,16){\line(1,-1)4}\put(42.2,20){\line(1,-1)4}
\put(72,0){\vector(0,3){25}} \put(60,12){\vector(3,0){25}}
\put(76.2,12){\line(0,-1)4}\put(76.2,16){\line(0,-1)4}
\put(72.2,8){\line(1,0)4}\put(72.2,16){\line(1,0)4}
\put(72.2,12){\line(1,-1)4}\put(72.2,16){\line(1,-1)4}
\put(12,12){\makebox(0,0)[cc]{$\bullet$}}
\put(46,8){\makebox(0,0)[cc]{$\bullet$}}
\put(72,8){\makebox(0,0)[cc]{$\bullet$}}
\end{picture}
\] \caption{Embedding a maximal conmutative order in a GEO of type $[1,2]$.}
\end{figure}
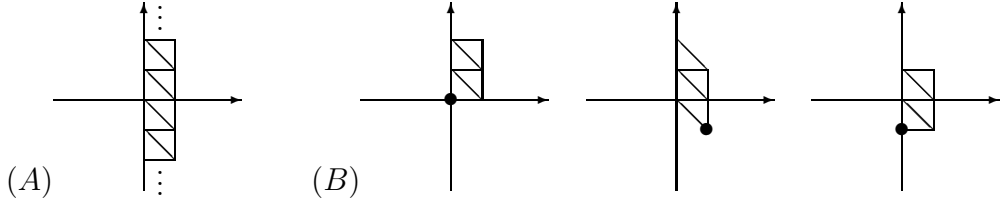

\begin{example}
Consider the algebra $L=F\times k$, where $F$ is a ramified quadratic extension of
$k$, identified with the set of matrices of the form $\bbmatrix {\phi(f)}00a$, for $f\in F$, $a\in k$, where $\phi:F\rightarrow \matrici_2(k)$ is the regular representation. Then $\mathfrak{S}_0(\oink_L)$ is as shown in Figure 2A.

The picture already tell us that $\oink_L$ embeds into a local  GEO $\Da$ of type $[1,2]$, namely the one corresponding to the block in the left of Figure 2B. The blocks of the orders $E\Da E^{-1}$ and $E^2\Da E^{-2}$, where
$E=\bbmatrix{\phi(\pi_F)}001$ and $\pi_F$ is a uniformizing parameter of $F$, are shown in the other images of Figure 2B.
Note that the multiplicative group of $L$ acts transitively on the set of fractional ideals, whence conjugating by such
elements, the block of any GEO representing $\oink_L$ can be moved inside $\mathfrak{S}_0(\oink_L)$, taking a given vertex to any prescribed possition in this block. This can be used to give a second proof of Proposition \ref{su} for GEOs.
\end{example}

\section{global description of GEOs}

In the quaternionic case,there is a simple way to describe the spinor genera in a genus of Eichler orders in terms of the set of spinor genera of maximal orders. Here we describe it for an Eichler order $\Da$ whose level has only two prime divisors, say $\mathcal{L}(\Da)=
\wp_1^{\alpha_1}\wp_2^{\alpha_2}$ and leave the obvious generalization to the reader.
 Note that there exists $\alpha_i+1$ local maximal orders containing
 $\Da_{\wp_i}$ and they lie on a path of the BT-tree at $\wp_i$ for $i\in\{1,2\}$. It follows that the global maximal orders containing $\Da$ correspond to the vertices of a rectangular grid
with $\alpha_1+1$ columns and $\alpha_2+1$ rows. If we label these vertices alternating labels on each row and column as shown in Figure 3A, each label correspond to a spinor genus. If condition \textbf{GEC} holds, equally labeled vertices correspond to isomorphic maximal orders. 

Let $\mathbb{O}_0$ be the genus of maximal orders in $\alge$ and $\rho_0:\mathbb{O}_0\times\mathbb{O}_0
\rightarrow\mathrm{Gal}(\Sigma_0/K)$ the distance map on maximal orders.
The maximal orders $\Da_1$ and $\Da_2$, corresponding to any pair of
horizontally (resp. vertically) adjacent vertices, satisfy $\rho_0(\Da_1,\Da_2)=|[\wp_1,\Sigma_0/K]|$ (resp. $|[\wp_2,\Sigma/K]|$), where $I\mapsto|[I,\Sigma_0/K]|$ is the Artin map on ideals. It follows that the isomorphism class of every maximal order containing $\Da$ depends only
on the isomorphism class of the order in the lower-left corner. 
This gives a simple way to describe which eichler orders embed into which others whose precise formulation is left to the reader. 
From the description of the spinor class field for Eichler orders of a given level, quoted in the introduction, next result follows:
\begin{proposition}\label{eo}
Let $\Da=\Da_1\cap\Da_2$ and $\Da'=\Da_3\cap\Da_4$ be Eichler orders of the same level, and 
let $\Sigma\subseteq\Sigma_0$ be the corresponding spinor class field. Then $\Da$ and $\Da'$ are in the
same spinor genus if and only if $\rho_0(\Da_1,\Da_3)$ is trivial on $\Sigma$. 
\end{proposition}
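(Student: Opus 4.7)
The plan is to prove the single formula
\[
\rho(\Da,\Da') \;=\; \mathrm{res}_\Sigma\,\rho_0(\Da_1,\Da_3),
\]
from which the proposition is immediate: $\Da$ and $\Da'$ lie in the same spinor genus iff $\rho(\Da,\Da')=\mathrm{Id}_\Sigma$, iff the restriction of $\rho_0(\Da_1,\Da_3)$ to $\Sigma$ is trivial. To start, pick any $a\in\alge_\ad^*$ with $\Da'=a\Da a^{-1}$, so by definition $\rho(\Da,\Da')=[N(a),\Sigma/K]$. Set $\tilde\Da_3=a\Da_1 a^{-1}$ and $\tilde\Da_4=a\Da_2 a^{-1}$, giving a second decomposition $\Da'=\tilde\Da_3\cap\tilde\Da_4$. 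A quaternion Eichler order locally determines the unordered pair of endpoints of its associated path in the BT-tree (these endpoints are the two extremes of the path of maximal orders containing it), so at each place $\wp$ one has $\{\tilde\Da_{3,\wp},\tilde\Da_{4,\wp}\}=\{\Da_{3,\wp},\Da_{4,\wp}\}$; that is, locally $\tilde\Da_3$ either equals $\Da_3$ or has been swapped with $\Da_4$.

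The next step is to construct an adelic element $t\in\alge_\ad^*$ satisfying $t\tilde\Da_3 t^{-1}=\Da_3$ and $t\Da' t^{-1}=\Da'$. At places where $\tilde\Da_{3,\wp}=\Da_{3,\wp}$, take $t_\wp=1$; at the finitely many places where $\tilde\Da_{3,\wp}=\Da_{4,\wp}$, choose $t_\wp$ in the local normalizer of $\Da'_\wp$ that exchanges the two endpoints of the local path. Such an element exists because in the quaternion setting at a split place the normalizer of an Eichler order acts on its associated path by either the identity or the reversal, and the reversal is realized by an element of $\alge_\wp^*$ (e.g.\ $\bigl(\begin{smallmatrix}0&1\\ \pi^{\alpha(\wp)}&0\end{smallmatrix}\bigr)$ in a suitable basis). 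By construction $t_\wp$ stabilizes $\Da'_\wp$, so $N(t_\wp)\in H_\wp(\Da')=H_\wp(\Da)$ at every place; hence $N(t)\in H(\Da)$ and therefore $[N(t),\Sigma/K]=\mathrm{Id}_\Sigma$.

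To conclude, $(ta)\Da_1(ta)^{-1}=t\tilde\Da_3 t^{-1}=\Da_3$, so $\rho_0(\Da_1,\Da_3)=[N(t)N(a),\Sigma_0/K]$, and restricting to $\Sigma$ yields
\[
\mathrm{res}_\Sigma\,\rho_0(\Da_1,\Da_3)\;=\;[N(t),\Sigma/K]\cdot[N(a),\Sigma/K]\;=\;\mathrm{Id}_\Sigma\cdot\rho(\Da,\Da')\;=\;\rho(\Da,\Da'),
\]
which is the claimed formula. The main obstacle is verifying the second paragraph: the existence of a local swap $t_\wp$ in the normalizer of $\Da'_\wp$ and the norm containment $N(t_\wp)\in H_\wp(\Da)$. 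The first is a geometric fact about the BT-tree in the quaternion case, and the second is essentially dual to the description of $\Sigma$ recalled in the introduction, since the swap has reduced norm of valuation $\alpha(\wp)$, and $\Sigma$ was defined precisely to split at primes of odd level so that such norms land in the subgroup defining $\Sigma$.
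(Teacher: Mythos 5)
Your argument is correct, and since the paper dismisses this proposition with a single sentence (``next result follows'' from the description of $\Sigma$ quoted in the introduction), you have in effect supplied the missing proof. The reduction to the identity $\rho(\Da,\Da') = \mathrm{res}_\Sigma\,\rho_0(\Da_1,\Da_3)$ is exactly the right move, and the two facts it rests on are both sound: (i) at each place the Eichler order $\Da'_\wp$ determines the unordered pair of endpoints of its segment in the local tree, so $\{\tilde\Da_{3,\wp},\tilde\Da_{4,\wp}\}=\{\Da_{3,\wp},\Da_{4,\wp}\}$; and (ii) there is a local swap $t_\wp$ normalizing $\Da'_\wp$ (e.g.\ conjugation by $\bigl(\begin{smallmatrix}0&1\\ \pi^{\alpha(\wp)}&0\end{smallmatrix}\bigr)$ in a standard basis), whence $N(t_\wp)\in H_\wp(\Da')=H_\wp(\Da)$ and $N(t)\in H(\Da)$, killing $[N(t),\Sigma/K]$.

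Two small observations. First, your closing paragraph is somewhat redundant: once you note that $t_\wp$ lies in the normalizer of $\Da'_\wp$, the containment $N(t_\wp)\in H_\wp(\Da)$ is already established, without invoking the valuation $\alpha(\wp)$ or the explicit description of $\Sigma$ --- that description is precisely what guarantees that the swap's norm lies in $H_\wp(\Da)$, so you are implicitly proving the same thing twice. Second, it is worth noting that the quaternion case is special in that every local Eichler order is automatically ``symmetric'' in the sense of the paper's \S1 (the vector $[b]$ has only one entry), which is exactly what makes the swap element always available; this is what Lemma \ref{ant} encodes in higher dimension and is the reason the analogous Proposition \ref{geo} must carry the locally-symmetric hypothesis. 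Keeping this in mind makes clear that your proof is the $n=2$ specialization of the general mechanism the paper builds up in \S3.
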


\begin{example}
Assume $|[\wp_1,\Sigma_0/K]|$ and $|[\wp_2,\Sigma_0/K]|$ are non-trivial and different. If $\alpha_1=3$ and
$\alpha_2=5$ as in Figure 3A, we have $[\Sigma_0:\Sigma]=4$, and $\Da_1\cap\Da_2$ is in the same spinor genus as
 $\Da_3\cap\Da_4$, as soon as $\Da_1$ is in the spinor genus $A$, while $\Da_3\in A\cup B\cup C\cup D$. 
In Figure 3C, the level $\wp_1^2\wp_2^4$ of the Eichler order is a square, so there is a unique spinor genus of
maximal orders, namely $A$, corresponding to this Eichler order. 
Figure 3B shows an example where $|[\wp_1,\Sigma_0/K]|=|[\wp_2,\Sigma_0/K]|$ is non-trivial, whence 
$[\Sigma_0:\Sigma]=2$. Here the same Eichler order can be defined as the intersection of two orders in $A$ or two orders
in $C$.
\begin{figure}
\unitlength 1mm 
\linethickness{0.4pt}
\ifx\plotpoint\undefined\newsavebox{\plotpoint}\fi 
\[\textnormal{(A)}\ 
\begin{picture}(28,28)(0,0)
\put(0,0){\line(0,1)5}\put(0,5){\line(0,1)5}
\put(0,10){\line(0,1)5}\put(0,15){\line(0,1)5}
\put(0,20){\line(0,1)5}
\put(0,0){\line(1,0)5}\put(0,5){\line(1,0)5}
\put(0,10){\line(1,0)5}\put(0,15){\line(1,0)5}
\put(0,20){\line(1,0)5}\put(0,25){\line(1,0)5}
\put(5,0){\line(0,1)5}\put(5,5){\line(0,1)5}
\put(5,10){\line(0,1)5}\put(5,15){\line(0,1)5}
\put(5,20){\line(0,1)5}
\put(5,0){\line(1,0)5}\put(5,5){\line(1,0)5}
\put(5,10){\line(1,0)5}\put(5,15){\line(1,0)5}
\put(5,20){\line(1,0)5}\put(5,25){\line(1,0)5}
\put(10,0){\line(0,1)5}\put(10,5){\line(0,1)5}
\put(10,10){\line(0,1)5}\put(10,15){\line(0,1)5}
\put(10,20){\line(0,1)5}
\put(10,0){\line(1,0)5}\put(10,5){\line(1,0)5}
\put(10,10){\line(1,0)5}\put(10,15){\line(1,0)5}
\put(10,20){\line(1,0)5}\put(10,25){\line(1,0)5}
\put(15,0){\line(0,1)5}\put(15,5){\line(0,1)5}
\put(15,10){\line(0,1)5}\put(15,15){\line(0,1)5}
\put(15,20){\line(0,1)5}
\put(0,0){\makebox(0,0)[cc]{$\bullet$}}\put(0,5){\makebox(0,0)[cc]{$\bullet$}}
\put(0,10){\makebox(0,0)[cc]{$\bullet$}}\put(0,15){\makebox(0,0)[cc]{$\bullet$}}
\put(0,20){\makebox(0,0)[cc]{$\bullet$}}\put(0,25){\makebox(0,0)[cc]{$\bullet$}}
\put(5,0){\makebox(0,0)[cc]{$\bullet$}}\put(5,5){\makebox(0,0)[cc]{$\bullet$}}
\put(5,10){\makebox(0,0)[cc]{$\bullet$}}\put(5,15){\makebox(0,0)[cc]{$\bullet$}}
\put(5,20){\makebox(0,0)[cc]{$\bullet$}}\put(5,25){\makebox(0,0)[cc]{$\bullet$}}
\put(10,0){\makebox(0,0)[cc]{$\bullet$}}\put(10,5){\makebox(0,0)[cc]{$\bullet$}}
\put(10,10){\makebox(0,0)[cc]{$\bullet$}}\put(10,15){\makebox(0,0)[cc]{$\bullet$}}
\put(10,20){\makebox(0,0)[cc]{$\bullet$}}\put(10,25){\makebox(0,0)[cc]{$\bullet$}}
\put(15,0){\makebox(0,0)[cc]{$\bullet$}}\put(15,5){\makebox(0,0)[cc]{$\bullet$}}
\put(15,10){\makebox(0,0)[cc]{$\bullet$}}\put(15,15){\makebox(0,0)[cc]{$\bullet$}}
\put(15,20){\makebox(0,0)[cc]{$\bullet$}}\put(15,25){\makebox(0,0)[cc]{$\bullet$}}
\put(1,2){\makebox(0,0)[cc]{${}_A$}}\put(1,7){\makebox(0,0)[cc]{${}_B$}}
\put(1,12){\makebox(0,0)[cc]{${}_A$}}\put(1,17){\makebox(0,0)[cc]{${}_B$}}
\put(1,22){\makebox(0,0)[cc]{${}_A$}}\put(1,27){\makebox(0,0)[cc]{${}_B$}}
\put(6,2){\makebox(0,0)[cc]{${}_C$}}\put(6,7){\makebox(0,0)[cc]{${}_D$}}
\put(6,12){\makebox(0,0)[cc]{${}_C$}}\put(6,17){\makebox(0,0)[cc]{${}_D$}}
\put(6,22){\makebox(0,0)[cc]{${}_C$}}\put(6,27){\makebox(0,0)[cc]{${}_D$}}
\put(11,2){\makebox(0,0)[cc]{${}_A$}}\put(11,7){\makebox(0,0)[cc]{${}_B$}}
\put(11,12){\makebox(0,0)[cc]{${}_A$}}\put(11,17){\makebox(0,0)[cc]{${}_B$}}
\put(11,22){\makebox(0,0)[cc]{${}_A$}}\put(11,27){\makebox(0,0)[cc]{${}_B$}}
\put(16,2){\makebox(0,0)[cc]{${}_C$}}\put(16,7){\makebox(0,0)[cc]{${}_D$}}
\put(16,12){\makebox(0,0)[cc]{${}_C$}}\put(16,17){\makebox(0,0)[cc]{${}_D$}}
\put(16,22){\makebox(0,0)[cc]{${}_C$}}\put(16,27){\makebox(0,0)[cc]{${}_D$}}
\end{picture}\qquad\textnormal{(B)}\ 
\begin{picture}(28,28)(0,0)
\put(0,0){\line(0,1)5}\put(0,5){\line(0,1)5}
\put(0,10){\line(0,1)5}\put(0,15){\line(0,1)5}
\put(0,20){\line(0,1)5}
\put(0,0){\line(1,0)5}\put(0,5){\line(1,0)5}
\put(0,10){\line(1,0)5}\put(0,15){\line(1,0)5}
\put(0,20){\line(1,0)5}\put(0,25){\line(1,0)5}
\put(5,0){\line(0,1)5}\put(5,5){\line(0,1)5}
\put(5,10){\line(0,1)5}\put(5,15){\line(0,1)5}
\put(5,20){\line(0,1)5}
\put(5,0){\line(1,0)5}\put(5,5){\line(1,0)5}
\put(5,10){\line(1,0)5}\put(5,15){\line(1,0)5}
\put(5,20){\line(1,0)5}\put(5,25){\line(1,0)5}
\put(10,0){\line(0,1)5}\put(10,5){\line(0,1)5}
\put(10,10){\line(0,1)5}\put(10,15){\line(0,1)5}
\put(10,20){\line(0,1)5}
\put(10,0){\line(1,0)5}\put(10,5){\line(1,0)5}
\put(10,10){\line(1,0)5}\put(10,15){\line(1,0)5}
\put(10,20){\line(1,0)5}\put(10,25){\line(1,0)5}
\put(15,0){\line(0,1)5}\put(15,5){\line(0,1)5}
\put(15,10){\line(0,1)5}\put(15,15){\line(0,1)5}
\put(15,20){\line(0,1)5}
\put(0,0){\makebox(0,0)[cc]{$\bullet$}}\put(0,5){\makebox(0,0)[cc]{$\bullet$}}
\put(0,10){\makebox(0,0)[cc]{$\bullet$}}\put(0,15){\makebox(0,0)[cc]{$\bullet$}}
\put(0,20){\makebox(0,0)[cc]{$\bullet$}}\put(0,25){\makebox(0,0)[cc]{$\bullet$}}
\put(5,0){\makebox(0,0)[cc]{$\bullet$}}\put(5,5){\makebox(0,0)[cc]{$\bullet$}}
\put(5,10){\makebox(0,0)[cc]{$\bullet$}}\put(5,15){\makebox(0,0)[cc]{$\bullet$}}
\put(5,20){\makebox(0,0)[cc]{$\bullet$}}\put(5,25){\makebox(0,0)[cc]{$\bullet$}}
\put(10,0){\makebox(0,0)[cc]{$\bullet$}}\put(10,5){\makebox(0,0)[cc]{$\bullet$}}
\put(10,10){\makebox(0,0)[cc]{$\bullet$}}\put(10,15){\makebox(0,0)[cc]{$\bullet$}}
\put(10,20){\makebox(0,0)[cc]{$\bullet$}}\put(10,25){\makebox(0,0)[cc]{$\bullet$}}
\put(15,0){\makebox(0,0)[cc]{$\bullet$}}\put(15,5){\makebox(0,0)[cc]{$\bullet$}}
\put(15,10){\makebox(0,0)[cc]{$\bullet$}}\put(15,15){\makebox(0,0)[cc]{$\bullet$}}
\put(15,20){\makebox(0,0)[cc]{$\bullet$}}\put(15,25){\makebox(0,0)[cc]{$\bullet$}}
\put(1,2){\makebox(0,0)[cc]{${}_A$}}\put(1,7){\makebox(0,0)[cc]{${}_C$}}
\put(1,12){\makebox(0,0)[cc]{${}_A$}}\put(1,17){\makebox(0,0)[cc]{${}_C$}}
\put(1,22){\makebox(0,0)[cc]{${}_A$}}\put(1,27){\makebox(0,0)[cc]{${}_C$}}
\put(6,2){\makebox(0,0)[cc]{${}_C$}}\put(6,7){\makebox(0,0)[cc]{${}_A$}}
\put(6,12){\makebox(0,0)[cc]{${}_C$}}\put(6,17){\makebox(0,0)[cc]{${}_A$}}
\put(6,22){\makebox(0,0)[cc]{${}_C$}}\put(6,27){\makebox(0,0)[cc]{${}_A$}}
\put(11,2){\makebox(0,0)[cc]{${}_A$}}\put(11,7){\makebox(0,0)[cc]{${}_C$}}
\put(11,12){\makebox(0,0)[cc]{${}_A$}}\put(11,17){\makebox(0,0)[cc]{${}_C$}}
\put(11,22){\makebox(0,0)[cc]{${}_A$}}\put(11,27){\makebox(0,0)[cc]{${}_C$}}
\put(16,2){\makebox(0,0)[cc]{${}_C$}}\put(16,7){\makebox(0,0)[cc]{${}_A$}}
\put(16,12){\makebox(0,0)[cc]{${}_C$}}\put(16,17){\makebox(0,0)[cc]{${}_A$}}
\put(16,22){\makebox(0,0)[cc]{${}_C$}}\put(16,27){\makebox(0,0)[cc]{${}_A$}}
\end{picture}\qquad\textnormal{(C)}\ 
\begin{picture}(28,28)(0,0)
\put(0,0){\line(0,1)5}\put(0,5){\line(0,1)5}
\put(0,10){\line(0,1)5}\put(0,15){\line(0,1)5}
\put(0,0){\line(1,0)5}\put(0,5){\line(1,0)5}
\put(0,10){\line(1,0)5}\put(0,15){\line(1,0)5}
\put(0,20){\line(1,0)5}
\put(5,0){\line(0,1)5}\put(5,5){\line(0,1)5}
\put(5,10){\line(0,1)5}\put(5,15){\line(0,1)5}
\put(5,0){\line(1,0)5}\put(5,5){\line(1,0)5}
\put(5,10){\line(1,0)5}\put(5,15){\line(1,0)5}
\put(5,20){\line(1,0)5}
\put(10,0){\line(0,1)5}\put(10,5){\line(0,1)5}
\put(10,10){\line(0,1)5}\put(10,15){\line(0,1)5}
\put(0,0){\makebox(0,0)[cc]{$\bullet$}}\put(0,5){\makebox(0,0)[cc]{$\bullet$}}
\put(0,10){\makebox(0,0)[cc]{$\bullet$}}\put(0,15){\makebox(0,0)[cc]{$\bullet$}}
\put(0,20){\makebox(0,0)[cc]{$\bullet$}}
\put(5,0){\makebox(0,0)[cc]{$\bullet$}}\put(5,5){\makebox(0,0)[cc]{$\bullet$}}
\put(5,10){\makebox(0,0)[cc]{$\bullet$}}\put(5,15){\makebox(0,0)[cc]{$\bullet$}}
\put(5,20){\makebox(0,0)[cc]{$\bullet$}}
\put(10,0){\makebox(0,0)[cc]{$\bullet$}}\put(10,5){\makebox(0,0)[cc]{$\bullet$}}
\put(10,10){\makebox(0,0)[cc]{$\bullet$}}\put(10,15){\makebox(0,0)[cc]{$\bullet$}}
\put(10,20){\makebox(0,0)[cc]{$\bullet$}}
\put(1,2){\makebox(0,0)[cc]{${}_A$}}\put(1,7){\makebox(0,0)[cc]{${}_B$}}
\put(1,12){\makebox(0,0)[cc]{${}_A$}}\put(1,17){\makebox(0,0)[cc]{${}_B$}}
\put(1,22){\makebox(0,0)[cc]{${}_A$}}
\put(6,2){\makebox(0,0)[cc]{${}_C$}}\put(6,7){\makebox(0,0)[cc]{${}_D$}}
\put(6,12){\makebox(0,0)[cc]{${}_C$}}\put(6,17){\makebox(0,0)[cc]{${}_D$}}
\put(6,22){\makebox(0,0)[cc]{${}_C$}}
\put(11,2){\makebox(0,0)[cc]{${}_A$}}\put(11,7){\makebox(0,0)[cc]{${}_B$}}
\put(11,12){\makebox(0,0)[cc]{${}_A$}}\put(11,17){\makebox(0,0)[cc]{${}_B$}}
\put(11,22){\makebox(0,0)[cc]{${}_A$}}
\end{picture}
\] \caption{Maximal orders containing an Eichler order.}
\end{figure}
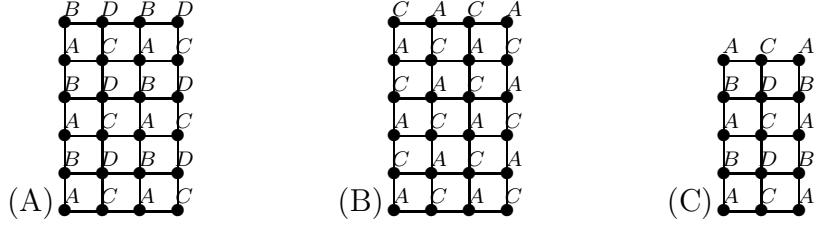
\end{example}

For a genus $\mathrm{gen}(\Da)$ of GEOs, we can give a similar labeling of vertices  of
the product cell complex $\mathbb{S}(\Da)=
\prod_{\wp}\mathfrak{S}_0(\Da,\wp)$ where the product is taken over all places at which
$\Da$ is not maximal, and each factor of this product is the block corresponding to a local GEO as described in \S3-\S4. When $\Da_\wp=\Da_0\cap\Da_{[r]}$ for a totally positive element $[r]\in\Gamma$,  the vertices corresponding to
$0$ and $[r]$ can be identified as the most distant pair (with respect to the usual distance in the $1$-skeleton of the complex) in that factor. We call them extreme vertices. A vertex in the product cell complex $\mathbb{S}(\Da)$ is called extreme if each of its coordinates is extreme.  In this case, Proposition \ref{eo} has a natural generalization whose proof is straightforward:
\begin{proposition}\label{geo}
Let $\Da=\Da_1\cap\Da_2$ and $\Da'=\Da_3\cap\Da_4$ be locally symmetric GEOs in the same genus, and 
let $\Sigma\subseteq\Sigma_0$ be the corresponding spinor class field. Then $\Da$ and $\Da'$ are in the
same spinor genus if and only if $\rho_0(\Da_1,\Da_3)$ is trivial on $\Sigma$. 
\end{proposition}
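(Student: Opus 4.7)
The plan is to mirror the argument preceding Proposition \ref{eo}, replacing the length-$\alpha$ path in a Bruhat-Tits tree by the symmetric parallelotope block of \S3 and the level computation by the final lemma of that section. Three local inputs drive the argument. First, $H(\Da)$ depends only on the genus of $\Da$, since conjugation preserves reduced norms and $H(\Da)=J_K\cap\prod_\wp H_\wp(\Da)$. Second, by Lemma 3.4 the set $\{\Da_{1,\wp},\Da_{2,\wp}\}$ is the unique pair of maximal orders in $\mathfrak{S}_0(\Da_\wp)$ whose type distance equals the local type of $\Da$; hence any local conjugator from $\Da_\wp$ to $\Da'_\wp$ sends $\Da_{1,\wp}$ either to $\Da_{3,\wp}$ or to $\Da_{4,\wp}$. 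Third, at every symmetric place the quotient $H_\wp(\Da)/H_\wp(\Da_1)$ has order $1$ or $2$, the nontrivial coset (when present) being represented by the norm of a local swap element that stabilizes $\Da_\wp$ and exchanges $\{\Da_{1,\wp},\Da_{2,\wp}\}$, supplied by Lemma \ref{ant}.

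For the implication $\Da'\in\mathrm{Spin}(\Da)\Rightarrow\rho_0(\Da_1,\Da_3)|_\Sigma=\mathrm{Id}$, I start from any adele $b$ with $b\Da b^{-1}=\Da'$ and $N(b)\in K^*H(\Da)$, and modify it place by place. At each finite place where $b_\wp\Da_{1,\wp} b_\wp^{-1}=\Da_{4,\wp}$, I pre-compose $b_\wp$ with the inverse of a local swap element of $\Da'_\wp$. Since swap elements lie in the stabilizer of $\Da'_\wp$, their norms lie in $H_\wp(\Da')=H_\wp(\Da)$, so the modification preserves $N(\cdot)$ modulo $K^*H(\Da)$. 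The resulting adele $a$ satisfies $a\Da_1 a^{-1}=\Da_3$ and $N(a)\in K^*H(\Da)$, giving the required triviality of $\rho_0(\Da_1,\Da_3)$ on $\Sigma$.

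For the converse, I start from any $a\in\alge_\ad^*$ with $a\Da_1 a^{-1}=\Da_3$ and $N(a)\in K^*H(\Da)$. At every place the order $a_\wp\Da_{2,\wp} a_\wp^{-1}$ lies at the same type distance from $\Da_{3,\wp}$ as $\Da_{4,\wp}$ does, but the two may differ. I correct this by right-multiplying $a_\wp$ by a suitable unit of $\Da_{1,\wp}$, invoking the transitive action of the units of $\Da_{1,\wp}$ on each shell of maximal orders of prescribed type distance from $\Da_{1,\wp}$ (the elementary-divisor theorem for $\oink_B$-lattices). Since the correcting units have norms in $H_\wp(\Da_1)\subseteq H_\wp(\Da)$, the class of $N(a)$ in $J_K/K^*H(\Da)$ is unchanged; after the correction $a\Da a^{-1}=\Da'$, so $\rho(\Da,\Da')=\mathrm{Id}$ and $\Da'\in\mathrm{Spin}(\Da)$.

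The main obstacle will be the rigidity step---the claim that the extreme pair is intrinsic (Lemma 3.4) and that units of $\Da_{1,\wp}$ act transitively on each shell of maximal orders of fixed type distance---which, however, reduces quickly to the invariant-factor discussion of \S3.
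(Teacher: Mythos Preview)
Your argument is correct and is precisely the ``straightforward'' argument the paper alludes to without spelling out: the paper simply states that Proposition \ref{geo} is the natural generalization of Proposition \ref{eo} and omits the details, whereas you have supplied them. The key local inputs you identify---Lemma \ref{pant} (which is what actually gives the rigidity of the extreme pair; your citation of Lemma 3.4 should really point here), Lemma \ref{ant} for the existence of the swap at symmetric places, and the transitivity of $\Da_{1,\wp}^*$ on shells of fixed type distance via invariant factors---are exactly the ingredients implicit in the paper's \S3 discussion and in the final lemma there computing $H_k(\Da)$.
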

This result does not generalize to non-symmetric GEOs. In fact, when $\alge$ is odd-dimensional, we always have $\Sigma=\Sigma_0$, but we can, certainly, intersect maximal orders in different spinor genera. The spinor genus
of a GEO $\Da_1\cap\Da_2$, where $\Da_1$ and $\Da_2$ are maximal orders, depends not only on the spinor genus 
of $\Da_1$, but also on the relative positions of $\Da_1$ as a vertex in the complex.
This phenomenon can be illustrated by next example.

\begin{example}
Assume $\alge$ is a $6$-dimensional algebra having degree $2$ at $\wp_1$ and degree $3$ at $\wp_2$, and let $\Da$
be a GEO having type $[1,1]$ at $\wp_1$ and $[1]$ at $\wp_2$. Then the complex $\mathbb{S}(\Da)$ is a cube as shown
in Figure 4A. This is a locally symmetric GEO, whence the order $\Da'$ whose complex $\mathbb{S}(\Da')$ has an order in
the spinor genus $D$ in the lower left corner (Figure 4C) is isomorphic to $\Da$. On the other hand, if  $\Da''$
is a GEO in the same algebra having type $[1,2]$ at $\wp_1$ and $[1]$ at $\wp_2$, the complex $\mathbb{S}(\Da'')$ is as shown in Figure 4B. This is not locally symetric at $\wp_1$, and in fact the order $\Da'''$ whose complex $\mathbb{S}(\Da')$ has an order in the spinor genus $C$ in the lower left corner is not isomorphic to $\Da''$, as a quick glance to
Figure 4D shows.
\begin{figure}
\unitlength 1mm 
\linethickness{0.4pt}
\ifx\plotpoint\undefined\newsavebox{\plotpoint}\fi 
\[\textnormal{(A)}\ 
\begin{picture}(28,28)(0,0)
\put(0,0){\line(0,1){15}}\put(0,0){\line(1,0){15}}
\put(15,0){\line(0,1){15}}\put(0,15){\line(1,0){15}}
\put(0,15){\line(1,1)7}\put(15,15){\line(1,1)7}
\put(7,22){\line(1,0){15}}\put(22,7){\line(0,1){15}}
\put(0,15){\line(1,1)7}\put(15,15){\line(1,1)7}\put(15,0){\line(1,1)7}
\multiput(15,15)(-0.1,0.0875){80}{\line(0,-1){.05}}
\multiput(15,0)(-0.5,0.4375){16}{\line(0,-1){.05}}
\multiput(0,0)(0.4375,0.4375){16}{\line(0,-1){.05}}
\multiput(7,7)(0,0.5){30}{\line(0,1){.05}}
\multiput(7,7)(0.5,0){30}{\line(1,0){.05}}
\put(0,0){\makebox(0,0)[cc]{$\bullet$}}\put(7,7){\makebox(0,0)[cc]{$\bullet$}}
\put(22,7){\makebox(0,0)[cc]{$\bullet$}}\put(15,0){\makebox(0,0)[cc]{$\bullet$}}
\put(0,15){\makebox(0,0)[cc]{$\bullet$}}\put(7,22){\makebox(0,0)[cc]{$\bullet$}}
\put(22,22){\makebox(0,0)[cc]{$\bullet$}}\put(15,15){\makebox(0,0)[cc]{$\bullet$}}
\put(1,2){\makebox(0,0)[cc]{${}_A$}}\put(8,9){\makebox(0,0)[cc]{${}_B$}}
\put(23,9){\makebox(0,0)[cc]{${}_A$}}\put(16,2){\makebox(0,0)[cc]{${}_C$}}
\put(1,17){\makebox(0,0)[cc]{${}_D$}}\put(6,24){\makebox(0,0)[cc]{${}_E$}}
\put(23,24){\makebox(0,0)[cc]{${}_D$}}\put(16,17){\makebox(0,0)[cc]{${}_F$}}
\end{picture}
\qquad\textnormal{(B)}\ 
\begin{picture}(43,28)(0,0)
\put(0,0){\line(0,1){15}}\put(0,0){\line(1,0){15}}
\put(15,0){\line(0,1){15}}\put(0,15){\line(1,0){15}}
\put(0,15){\line(1,1)7}\put(15,15){\line(1,1)7}
\put(7,22){\line(1,0){15}}
\put(0,15){\line(1,1)7}\put(15,15){\line(1,1)7}\put(30,0){\line(1,1)7}
\put(15,0){\line(1,0){15}}
\put(30,0){\line(0,1){15}}\put(15,15){\line(1,0){15}}
\put(15,15){\line(1,1)7}\put(30,15){\line(1,1)7}
\put(22,22){\line(1,0){15}}\put(37,7){\line(0,1){15}}
\put(0,15){\line(1,1)7}\put(15,15){\line(1,1)7}
\multiput(15,15)(-0.1,0.0875){80}{\line(0,-1){.05}}
\multiput(30,15)(-0.1,0.0875){80}{\line(0,-1){.05}}
\multiput(15,0)(-0.5,0.4375){16}{\line(0,-1){.05}}
\multiput(0,0)(0.4375,0.4375){16}{\line(0,-1){.05}}
\multiput(7,7)(0,0.5){30}{\line(0,1){.05}}
\multiput(7,7)(0.5,0){30}{\line(1,0){.05}}
\multiput(30,0)(-0.5,0.4375){16}{\line(0,-1){.05}}
\multiput(15,0)(0.4375,0.4375){16}{\line(0,-1){.05}}
\multiput(22,7)(0,0.5){30}{\line(0,1){.05}}
\multiput(22,7)(0.5,0){30}{\line(1,0){.05}}
\put(0,0){\makebox(0,0)[cc]{$\bullet$}}\put(7,7){\makebox(0,0)[cc]{$\bullet$}}
\put(22,7){\makebox(0,0)[cc]{$\bullet$}}\put(15,0){\makebox(0,0)[cc]{$\bullet$}}
\put(0,15){\makebox(0,0)[cc]{$\bullet$}}\put(7,22){\makebox(0,0)[cc]{$\bullet$}}
\put(22,22){\makebox(0,0)[cc]{$\bullet$}}\put(15,15){\makebox(0,0)[cc]{$\bullet$}}
\put(1,2){\makebox(0,0)[cc]{${}_A$}}\put(8,9){\makebox(0,0)[cc]{${}_B$}}
\put(23,9){\makebox(0,0)[cc]{${}_A$}}\put(16,2){\makebox(0,0)[cc]{${}_C$}}
\put(1,17){\makebox(0,0)[cc]{${}_D$}}\put(6,24){\makebox(0,0)[cc]{${}_E$}}
\put(23,24){\makebox(0,0)[cc]{${}_D$}}\put(16,17){\makebox(0,0)[cc]{${}_F$}}
\put(30,0){\makebox(0,0)[cc]{$\bullet$}}\put(37,7){\makebox(0,0)[cc]{$\bullet$}}
\put(37,22){\makebox(0,0)[cc]{$\bullet$}}\put(30,15){\makebox(0,0)[cc]{$\bullet$}}
\put(31,2){\makebox(0,0)[cc]{${}_B$}}\put(38,9){\makebox(0,0)[cc]{${}_C$}}
\put(38,24){\makebox(0,0)[cc]{${}_F$}}\put(31,17){\makebox(0,0)[cc]{${}_E$}}
\end{picture}
\]
\[
\textnormal{(C)}\ 
\begin{picture}(28,28)(0,0)
\put(0,0){\line(0,1){15}}\put(0,0){\line(1,0){15}}
\put(15,0){\line(0,1){15}}\put(0,15){\line(1,0){15}}
\put(0,15){\line(1,1)7}\put(15,15){\line(1,1)7}
\put(7,22){\line(1,0){15}}\put(22,7){\line(0,1){15}}
\put(0,15){\line(1,1)7}\put(15,15){\line(1,1)7}\put(15,0){\line(1,1)7}
\multiput(15,15)(-0.1,0.0875){80}{\line(0,-1){.05}}
\multiput(15,0)(-0.5,0.4375){16}{\line(0,-1){.05}}
\multiput(0,0)(0.4375,0.4375){16}{\line(0,-1){.05}}
\multiput(7,7)(0,0.5){30}{\line(0,1){.05}}
\multiput(7,7)(0.5,0){30}{\line(1,0){.05}}
\put(0,0){\makebox(0,0)[cc]{$\bullet$}}\put(7,7){\makebox(0,0)[cc]{$\bullet$}}
\put(22,7){\makebox(0,0)[cc]{$\bullet$}}\put(15,0){\makebox(0,0)[cc]{$\bullet$}}
\put(0,15){\makebox(0,0)[cc]{$\bullet$}}\put(7,22){\makebox(0,0)[cc]{$\bullet$}}
\put(22,22){\makebox(0,0)[cc]{$\bullet$}}\put(15,15){\makebox(0,0)[cc]{$\bullet$}}
\put(1,2){\makebox(0,0)[cc]{${}_D$}}\put(8,9){\makebox(0,0)[cc]{${}_E$}}
\put(23,9){\makebox(0,0)[cc]{${}_D$}}\put(16,2){\makebox(0,0)[cc]{${}_F$}}
\put(1,17){\makebox(0,0)[cc]{${}_A$}}\put(6,24){\makebox(0,0)[cc]{${}_B$}}
\put(23,24){\makebox(0,0)[cc]{${}_A$}}\put(16,17){\makebox(0,0)[cc]{${}_C$}}
\end{picture}
\qquad\textnormal{(D)}\ 
\begin{picture}(43,28)(0,0)
\put(0,0){\line(0,1){15}}\put(0,0){\line(1,0){15}}
\put(15,0){\line(0,1){15}}\put(0,15){\line(1,0){15}}
\put(0,15){\line(1,1)7}\put(15,15){\line(1,1)7}
\put(7,22){\line(1,0){15}}
\put(0,15){\line(1,1)7}\put(15,15){\line(1,1)7}\put(30,0){\line(1,1)7}
\put(15,0){\line(1,0){15}}
\put(30,0){\line(0,1){15}}\put(15,15){\line(1,0){15}}
\put(15,15){\line(1,1)7}\put(30,15){\line(1,1)7}
\put(22,22){\line(1,0){15}}\put(37,7){\line(0,1){15}}
\put(0,15){\line(1,1)7}\put(15,15){\line(1,1)7}
\multiput(15,15)(-0.1,0.0875){80}{\line(0,-1){.05}}
\multiput(30,15)(-0.1,0.0875){80}{\line(0,-1){.05}}
\multiput(15,0)(-0.5,0.4375){16}{\line(0,-1){.05}}
\multiput(0,0)(0.4375,0.4375){16}{\line(0,-1){.05}}
\multiput(7,7)(0,0.5){30}{\line(0,1){.05}}
\multiput(7,7)(0.5,0){30}{\line(1,0){.05}}
\multiput(30,0)(-0.5,0.4375){16}{\line(0,-1){.05}}
\multiput(15,0)(0.4375,0.4375){16}{\line(0,-1){.05}}
\multiput(22,7)(0,0.5){30}{\line(0,1){.05}}
\multiput(22,7)(0.5,0){30}{\line(1,0){.05}}
\put(0,0){\makebox(0,0)[cc]{$\bullet$}}\put(7,7){\makebox(0,0)[cc]{$\bullet$}}
\put(22,7){\makebox(0,0)[cc]{$\bullet$}}\put(15,0){\makebox(0,0)[cc]{$\bullet$}}
\put(0,15){\makebox(0,0)[cc]{$\bullet$}}\put(7,22){\makebox(0,0)[cc]{$\bullet$}}
\put(22,22){\makebox(0,0)[cc]{$\bullet$}}\put(15,15){\makebox(0,0)[cc]{$\bullet$}}
\put(1,2){\makebox(0,0)[cc]{${}_C$}}\put(8,9){\makebox(0,0)[cc]{${}_A$}}
\put(23,9){\makebox(0,0)[cc]{${}_C$}}\put(16,2){\makebox(0,0)[cc]{${}_B$}}
\put(1,17){\makebox(0,0)[cc]{${}_F$}}\put(6,24){\makebox(0,0)[cc]{${}_D$}}
\put(23,24){\makebox(0,0)[cc]{${}_F$}}\put(16,17){\makebox(0,0)[cc]{${}_E$}}
\put(30,0){\makebox(0,0)[cc]{$\bullet$}}\put(37,7){\makebox(0,0)[cc]{$\bullet$}}
\put(37,22){\makebox(0,0)[cc]{$\bullet$}}\put(30,15){\makebox(0,0)[cc]{$\bullet$}}
\put(31,2){\makebox(0,0)[cc]{${}_A$}}\put(38,9){\makebox(0,0)[cc]{${}_B$}}
\put(38,24){\makebox(0,0)[cc]{${}_E$}}\put(31,17){\makebox(0,0)[cc]{${}_D$}}
\end{picture}
\] \caption{Maximal orders containing a GEO.}
\end{figure}
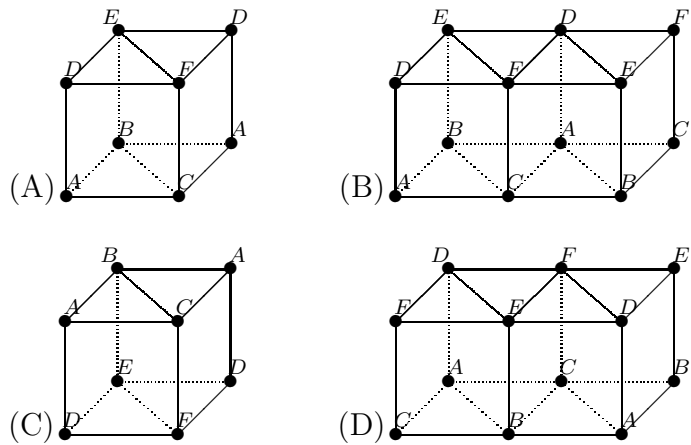
\end{example}

\end{document}